\documentclass[amscd, amssymb, verbatim]{amsart}[12pt]

\usepackage{graphicx,import}
\usepackage{fullpage}
\usepackage{hyperref}
\usepackage{placeins}
\usepackage{color}
\usepackage[utf8x]{inputenc}

\usepackage{todonotes}

\theoremstyle{plain}
\newtheorem{lemma}{Lemma}[section]
\newtheorem{corollary}[lemma]{Corollary}
\newtheorem{theorem}[lemma]{Theorem}
\theoremstyle{definition}

\newtheorem{definition}[lemma]{Definition}
\newtheorem{example}[lemma]{Example}
\newtheorem{question}[lemma]{Question}
\newtheorem{remark}[lemma]{Remark}

\newcommand{\D}{\mathbb{D}}

\newcommand{\R}{\mathbb{R}}
\newcommand{\cF}{\mathcal{F}}
\newcommand{\SC}{\mathrm{SC}}

\newcommand{\area}{\mathrm{area}}

\newcommand{\wn}{\mathrm{wn}}

\begin{document}
\title{Sweeping costs of planar domains}
\author{Brooks Adams}\email{brooksad@rams.colostate.edu}
\author{Henry Adams}\email{adams@math.colostate.edu}
\author{Colin Roberts}\email{robertsp@rams.colostate.edu}
\begin{abstract}
Let $D$ be a Jordan domain in the plane. We consider a pursuit-evasion, contamination clearing, or sensor sweep problem in which the pursuer at each point in time is modeled by a continuous curve, called the \emph{sensor curve}. Both time and space are continuous, and the intruders are invisible to the pursuer. Given $D$, what is the shortest length of a sensor curve necessary to provide a sweep of domain $D$, so that no continuously-moving intruder in $D$ can avoid being hit by the curve? We define this length to be the \emph{sweeping cost} of $D$. We provide an analytic formula for the sweeping cost of any Jordan domain in terms of the geodesic Fr\'{e}chet distance between two curves on the boundary of $D$ with non-equal winding numbers. As a consequence, we show that the sweeping cost of any convex domain is equal to its width, and that a convex domain of unit area with maximal sweeping cost is the equilateral triangle.
\end{abstract}
\maketitle

\section{Introduction}

Let $D$ be a Jordan domain, i.e.\ the homeomorphic image of a disk in the plane. Suppose that continuously-moving intruders wander in $D$. You and a friend are each given one end of a rope, and your task is to drag this rope through the domain $D$ in such a way so that every intruder is eventually intersected or caught by the rope. What is the shortest rope length you need in order to catch every possible intruder? We refer to such a continuous rope motion as a \emph{sweep} of $D$ (Figure~\ref{fig:sweep}), and we refer to the length of the shortest such possible rope as the \emph{sweeping cost} of $D$.

\begin{figure}[h]
	\def\svgwidth{5in}
	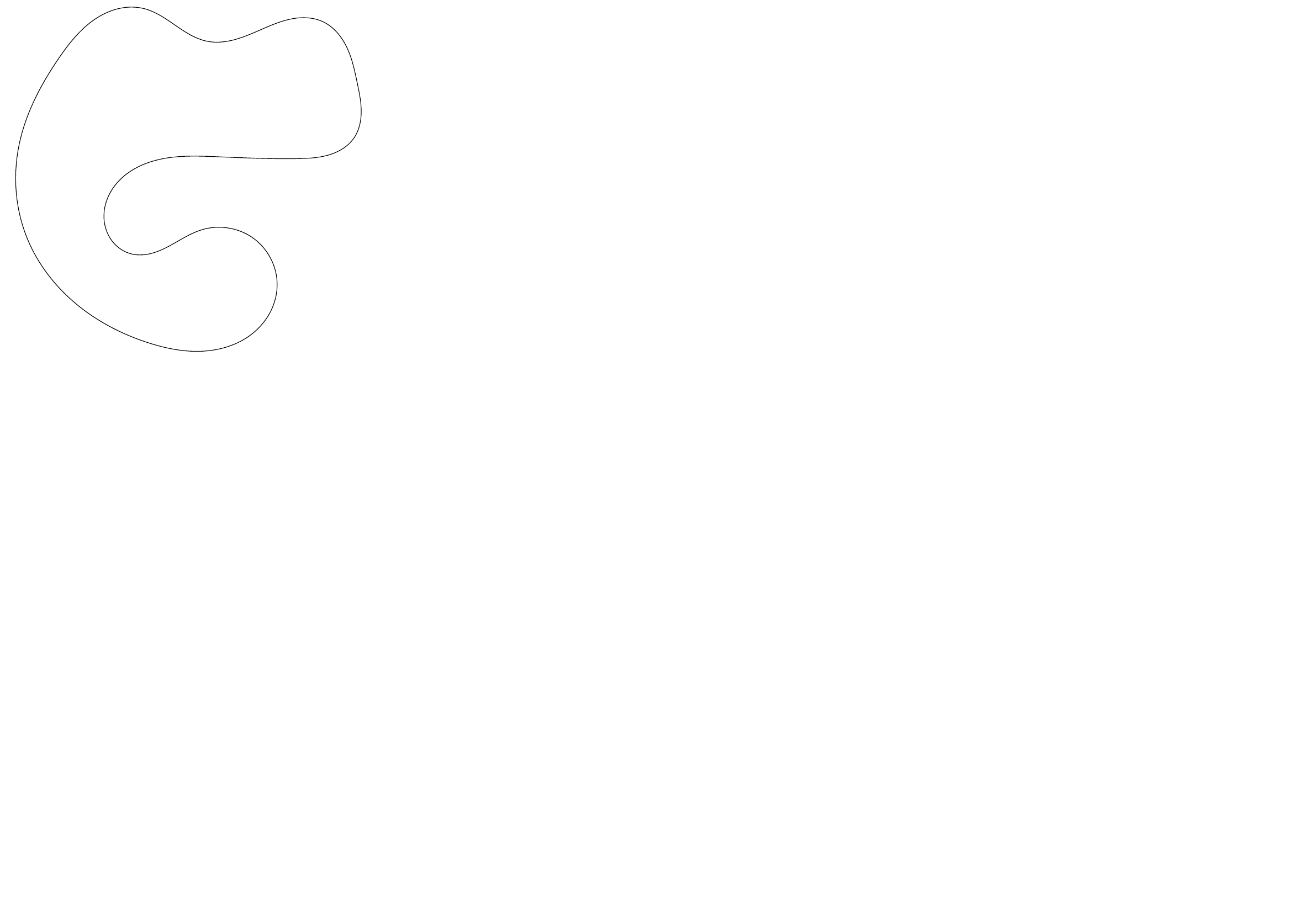
\caption{An example sweep of a domain in the plane. A time $t=0$ the entire domain is contaminated, and at time $t=1$ the clearing sweep is complete.}
\label{fig:sweep}
\end{figure}

The problem we consider is only one example of a wide variety of interesting pursuit-evasion problems; see Section~\ref{sec:related} for a brief introduction or \cite{chung2011search}, for example, for a survey. It is a pursuit-evasion problem in which both space and time are continuous, the pursuer is modeled at each point in time by a continuous curve, the intruder has complete information about the pursuer's location and its planned future movements, and the pursuer has no knowledge of the intruder's movements. Our problem can also be phrased as a contamination-clearing task, in which one must find the shortest rope necessary to clear domain $D$ of a contaminant which, when otherwise unrestricted by the rope, moves at infinite speed to fill its region.

As first examples, the sweeping cost of a disk is equal to its diameter, and the sweeping cost of an ellipse is equal to the length of its minor axis. These computations follow from Theorem~\ref{thm:area-lower-bound}, in which we prove that the sweeping cost of domain $D$ is at least as large as the shortest area-bisecting curve in $D$.

One motivation for considering a pursuer which is a rope, or a continuous curve at each point in time, is the context of mobile sensor networks. Suppose there is a large collection of disk-shaped sensors moving inside a planar domain, as considered in \cite{de2006coordinate,adams2015evasion}. What is the minimal number of sensors needed to clear this domain of all possible intruders? If $n$ is the number of sensors, and $\frac{1}{n}$ is the diameter of each sensor, then as $n\to\infty$ an upper bound for the number of sensors needed is given by the sweeping cost.

\begin{figure}[h]
	\def\svgwidth{5in}
	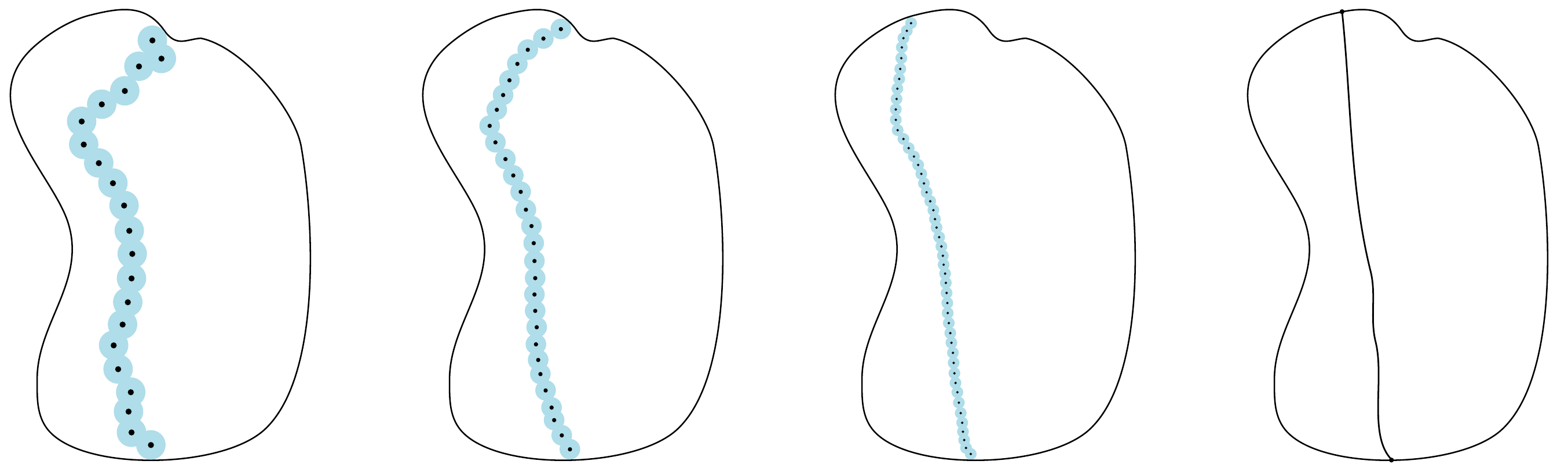
\label{fig:robots_to_line}
\end{figure}

As our main result, in Theorem~\ref{thm:frechet} we provide an analytic formula for the sweeping cost of an arbitrary Jordan domain $D$. Indeed, the sweeping cost of $D$ is equal to the infimum, taken over all pairs of curves in the boundary of $D$ whose concatenation wraps a nontrivial number of times around the boundary, of the geodesic Fr\'{e}chet distance between the two curves. The geodesic Fr\'{e}chet distance differs from the standard Fr\'{e}chet distance in that the distance between two points in $D$ is not their Euclidean distance, but instead the length of the shortest path between them in $D$. Our Theorem~\ref{thm:frechet} is also closely related to the geodesic width between two curves \cite{efrat2002new}.

Using our main result, we prove in Theorem~\ref{thm:sc=width} that the sweeping cost of a convex domain $D$ is equal to the width of $D$. As a consequence, it follows that the sweeping cost of a polygonal convex domain with $n$ vertices can be computed in time $O(n)$ and space $O(n)$ using the \emph{rotating calipers} technique \cite{toussaint1983solving}. Furthermore, it follows from \cite{pal1921minimumproblem,cerdan2006comparing} that a convex domain of unit area with the maximal possible sweeping cost---i.e.\ the most expensive convex domain to clear with a rope---is the equilateral triangle.

An intriguing open question motivated by our work is the following (Question~\ref{ques:weak-strong}). Given a Jordan domain $D$ and two continuous injective curves $\alpha,\beta$ with image in the boundary $\partial D$, is the weak geodesic Fr\'{e}chet distance between $\alpha$ and $\beta$ equal to the strong geodesic Fr\'{e}chet distance? The weak version of the Fr\'{e}chet distance allows $\alpha$ and $\beta$ to be reprarametrized non-injectively, whereas the strong version does not. 

We review related work in Section~\ref{sec:related}, state our problem of interest in Section~\ref{sec:preliminaries}, and describe some basic properties of the sweeping cost in Section~\ref{sec:properties}. In Section~\ref{sec:lower-bound} we provide a lower bound on the sweeping cost in terms of shortest area-bisecting curves. We provide analytic formulas for the sweeping costs of Jordan and convex domains in Sections~\ref{sec:jordan} and \ref{sec:convex}, and in Section~\ref{sec:extremal} we deduce that the sweeping cost of a unit-area convex domain is maximized by the equilateral triangle. The conclusion describes related problems of interest, and the appendix contains two technical lemmas and their proofs.

\section{Related work}\label{sec:related}

A wide variety of pursuit-evasion problems have appeared in the mathematics, computer science, engineering, and robotics literature; see \cite{chung2011search} for a survey. Space can modeled in a discrete fashion, for example by a graph \cite{alspach2006searching,bienstock1991graph}, or as a continuous domain in Euclidean space as we consider here. Time can similarly be discrete (turn-based) or continuous, as is our case. See \cite{alonso1992lion,beveridge2015two,chin2010detection,cortes2002coverage,liu2005mobility,parsons1978pursuit} for a selection of such problems.

A further important distinction in a pursuit-evasion problem is whether information is complete (pursuers and intruders know each others' locations), incomplete (pursuers and intruders are invisible to each other), or somewhere in-between. Our problem can be considered as one in which the pursuer has no knowledge of the intruders' movements, whereas the intruders have complete knowledge of the pursuer's current position and future movements. In other words, the pursuer must catch every possible intruder. Evasion problems in which the pursuer has no information and the intruders have complete information can equivalently be cast as contamination-clearing problems, see for example \cite{berger2009many,de2006coordinate,adams2015evasion}. Indeed, the contaminated region of the domain at a particular time includes all locations where an intruder could currently be located, and the uncontaminated region is necessarily free of intruders. It is the task of the pursuer to clear the entire domain of contamination, so that no possible intruders could remain undetected.

The paper \cite{efrat2002new} introduces the \emph{geodesic width} between two polylines, a notion that is very relevant for our problem. The geodesic width between two curves $\alpha$, $\beta$ is the same as the strong geodesic Fr\'{e}chet distance between them (Definition~\ref{def:geodesic-frechet}) when domain $D$ is chosen to be a region with boundary consisting of curves $\alpha$, $\beta$, and the two shortest paths connecting the endpoints of $\alpha$ and $\beta$. If curves $\alpha$ and $\beta$ are polylines with $n$ vertices in total, then \cite{efrat2002new} gives an $O(n^2\log n)$ algorithm for computing the geodesic width between them. The goal of our paper is instead to rigorously prove an analytic formula for the sweeping cost of a domain, Theorem~\ref{thm:frechet}, that is closely related to geodesic widths. Indeed, the right hand side of \eqref{eq:frechet} in Theorem~\ref{thm:frechet} is unchanged if we replace the geodesic distance between two curves (Definition~\ref{def:geodesic-dist-curves}) with the weak geodesic Fr\'{e}chet distance between them (Definition~\ref{def:geodesic-frechet}). The paper \cite{efrat2002new} also studies sweeps of planar domains by piecewise linear curves in which the cost of a sweep is not equal to a length, but instead to the number of vertices or joints in the curve.

Related notions to the geodesic width include the isotopic Fr\'{e}chet distance \cite{chambers2011isotopic} and the minimum deformation area \cite{wangmeasuring} between two curves $\alpha$ and $\beta$. Whereas the geodesic width considers deformations between $\alpha$ and $\beta$ such that no two intermediate curves intersect, this restriction is not present for the isotopic Fr\'{e}chet distance, which can therefore be defined between intersecting curves. The paper \cite{wangmeasuring} considers a distance between two curves on a 2-manifold which is instead an area: the minimal total surface area swept out by any deformation between the two curves. If the curves are piecewise linear in the plane, have $n$ total vertices, and have $I$ intersection points, then \cite{wangmeasuring} gives an $O(n+I^2\log n)$ algorithm to compute the minimum deformation area between them.

\section{Preliminaries and notation}\label{sec:preliminaries}

Let $d\colon\R^2\times\R^2\to\R$ denote the Euclidean metric on $\R^2$. The distance between two subsets $X,Y\subseteq\R^2$ is defined as $d(X,Y)=\inf\{d(x,y)~|~x\in X\mbox{ and }y\in Y\}$. We denote the closure of a set $X\subseteq\R^2$ by $\overline{X}$.

\subsection*{Jordan domains and geodesics}

Let $D\subseteq\R^2$ be a Jordan domain, i.e.\ the homeomorphic image of a closed disk in $\R^2$. It follows that $D$ is compact and simply-connected, and its boundary $\partial D$ is a topological circle. Given a point $x\in D$, we let $B(x,\epsilon)=\{y\in D~|~d(x,y)<\epsilon\}$ denote the open ball about $x$ in $D$. We denote the $\epsilon$-offset of a set $X\subseteq D$ by $B(X,\epsilon)=\cup_{x\in X}B(x,\epsilon)$. Given a subset $X\subseteq D$, we define its boundary as $\partial X=\overline{X}\cap\overline{\R^2\setminus X}$.

We refer the reader to \cite{burago2001course} for the basics of geodesic curves and distances. The length of a continuous path $\gamma\colon[a,b]\to D$ is defined as in \cite[Defintion~2.3.1]{burago2001course}; we denote this length by $L(\gamma)$. Curve $\gamma$ is said to be \emph{rectifiable} if $L(\gamma)<\infty$. Domain $D$ has a \emph{length structure} (\cite[Section~2.1]{burago2001course}) in which all continuous paths are admissible, and the length is given by the function $L$. 
The associated \emph{geodesic metric} $d_L\colon D\times D\to\R$, also known as a \emph{path-length} or \emph{intrinsic metric}, is
\[ d_L(x,y)=\inf\{L(\gamma)~|~\gamma\colon[a,b]\to D\mbox{ is continuous with }\gamma(a)=x,\ \gamma(b)=y\}. \]
The precise definition of a geodesic, or length-minimizing curve in $D$, is given in \cite[Definition~2.5.27]{burago2001course}. Since $D$ is a Jordan domain, it follows from \cite{bourgin1989shortest,bishop2005intrinsic} that each pair of points in $D$ is joined by a unique shortest geodesic in $D$.

\begin{definition}\label{def:geodesic-dist-curves}
Let $D\subseteq \R^2$ be a Jordan domain. We define the geodesic distance between two curves $\alpha,\beta\colon [a,b]\to D$ to be
\[ d_L(\alpha,\beta)=\max_{t\in[a,b]}d_L(\alpha(t),\beta(t)). \]
\end{definition}

\subsection*{Fr\'{e}chet and geodesic Fr\'{e}chet distances}

The \emph{Fr\'{e}chet distance} is a measure of similarity between two curves $\alpha, \beta \colon [0,1] \to \R^2$. One application of the Fr\'{e}chet distance is in handwriting input recognition for a computer \cite{sriraghavendra2007frechet}: in order to properly tell which letters a user has written, the machine must determine which curves (representing letters) are the most similar. Other notions of distance, such as the Hausdorff distance between the images of the curves, are not necessarily sensitive enough for this task.

The intuition behind the Fr\'{e}chet distance is that you are walking along path $\alpha$, your dog is walking along path $\beta$, and you want to know how long of a leash you need. There are two notions, namely the \emph{weak Fr\'{e}chet distance} and the \emph{strong Fr\'{e}chet distance}. In the weak case, you and your dog are allowed to backtrack along your respective paths, but in the strong case backtracking is forbidden. In general these two distances need not be equal.  

\begin{definition}
Let $\alpha, \beta \colon [0,1] \to \R^2$ be continuous curves. Then the \emph{weak (resp.\ strong) Fr\'{e}chet distance} between $\alpha$ and $\beta$ is
\[d_{\mbox{Fr\'{e}chet}}(\alpha,\beta) = \inf_{a,b} \max_{t \in [0,1]} \left\{d\left(\alpha(a(t),\beta(b(t)\right)\right\},\]
where the infimum is taken over all continuous $a, b \colon [0,1] \to [0,1]$ which are surjective (resp.\ bijective).
\end{definition}

If $D\subseteq \R^2$ is a Jordan domain and $\alpha, \beta \colon [0,1] \to D$ are two curves, then we can consider a variant of the Fr\'{e}chet distance in which the Euclidean metric $d$ is replaced with the geodesic metric $d_L$.

\begin{definition}\label{def:geodesic-frechet}
Let $D\subseteq \R^2$ be a Jordan domain, and let $\alpha, \beta \colon [0,1] \to D$ be continuous curves. Then the \emph{weak (resp.\ strong) geodesic Fr\'{e}chet distance} between $\alpha$ and $\beta$ is
\[d_{\mbox{geodesic Fr\'{e}chet}}(\alpha,\beta) = \inf_{a,b} \max_{t \in [0,1]} \left\{d_L\left(\alpha(a(t),\beta(b(t)\right)\right\},\]
where the infimum is taken over all continuous $a, b \colon [0,1] \to [0,1]$ which are surjective (resp.\ bijective).
\end{definition}

\subsection*{Sensor curves}

Let $I=[0,1]$ be the unit interval. We define a sensor curve to be a time-varying rectifiable curve in $D$.

\begin{definition}\label{def:sensor-curve}
A \emph{sensor curve} is a continuous map $f\colon I\times I\to D$ such that
\begin{enumerate}
\item[(i)] each curve $f(\cdot,t)\colon I\to D$ is rectifiable and injective for $t\in(0,1)$,
\item[(ii)] $f(I,0)$ and $f(I,1)$ are each (possibly distinct) single points in $\partial D$, and
\item[(iii)] $f(s,t)\in\partial D$ implies $s\in\{0,1\}$ or $t\in\{0,1\}$.
\end{enumerate}
\end{definition}
\noindent 
We think of the first input $s$ as a spatial variable and of the second $t$ as a temporal variable; in particular $f(I,t)$ is the region covered by the curve of sensors at time $t$. Assumption (ii) states that the images of the sensor curve at times 0 and 1 are single points, and assumption (iii) implies that (apart from times 0 and 1) only the boundary of the sensor curve intersects $\partial D$. We define the length of a sensor curve to be
\[ L(f)=\max_{t\in I}\ L(f(\cdot,t)). \]

An \emph{intruder} is a continuous path $\gamma\colon I\to D$. We say that an intruder is caught by a sensor curve $f$ at time $t$ if $\gamma(t)\in f(I,t)$. A path $\gamma\colon[0,t]\to D$ such that $\gamma(t')\notin f(I,t')$ for all $t'\in[0,t]$ is called an \emph{evasion path}. Sensor curve $f$ is a \emph{sweep} if every continuously moving intruder $\gamma\colon I\to D$ is necessarily caught at some time $t$, or equivalently, if no evasion path over the full time interval $I$ exists.\footnote{Our definition is similar to the graph-based definition in \cite[Definition~2.1]{alspach2006searching}.}

The following notation will prove convenient. Fix a sensor curve $f$. We let $C(t)\subseteq D$ be the \emph{contaminated region} at time $t$, and we let $U(t)\subseteq D$ be the \emph{uncontaminated region} at time $t$. More precisely,
\[ C(t)=\{x\in D~|~\exists~\gamma\colon[0,t]\to D \mbox{ with }\gamma(t)=x\mbox{ and }\gamma(t')\notin f(I,t')~\forall t'\in[0,t]\}\quad\mbox{and}\quad U(t)=D\setminus C(t). \]
Note that sensor curve $f$ is a sweep if and only if $C(1)=\emptyset$, or equivalently $U(1)=D$.

\begin{definition}
Let $\cF(D)$ be the set of all sensor curve sweeps of $D$. The \emph{sweeping cost} of $D$ is
\[ \SC(D)=\inf_{f\in\cF(D)}\ L(f).\]
\end{definition}

\begin{remark}
The results of Sections~\ref{sec:properties}--\ref{sec:lower-bound} hold even if assumptions (ii) and (iii) in Definition~\ref{def:sensor-curve} are removed.
\end{remark}


\section{Properties of sensor sweeps}\label{sec:properties}

We now prove some basic properties of sensor sweeps and the contaminated and uncontaminated regions.

\begin{lemma}\label{lem:connected}
If $x$ and $x'$ are in the same path-connected component of $D\setminus f(I,t)$, then $x\in U(t)$ if and only if $x'\in U(t)$.
\end{lemma}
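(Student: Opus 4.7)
The plan is to argue that both $C(t)$ and its complement $(D\setminus f(I,t))\setminus C(t)$ are open subsets of $D\setminus f(I,t)$. Since they partition $D\setminus f(I,t)$, any path-connected component of $D\setminus f(I,t)$---being in particular connected---must lie entirely inside one or the other, which is exactly the conclusion of the lemma. Note also that $C(t)\subseteq D\setminus f(I,t)$, because any evasion path $\gamma$ with $\gamma(t)=x$ requires $x=\gamma(t)\notin f(I,t)$.

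The workhorse is a local step. Given $x\in D\setminus f(I,t)$, I would first produce a path-connected open neighborhood $V$ of $x$ in $D$ together with some $\epsilon>0$ such that $V\cap f(I,t')=\emptyset$ for every $t'\in[t-\epsilon,t]$. This uses (i) that $f(I,t)$ is compact and $x\notin f(I,t)$, so $\delta:=d(x,f(I,t))>0$; (ii) that $D$ is locally path-connected, since it is homeomorphic to a closed disk by Jordan--Sch\"onflies, allowing one to choose a path-connected $V\subseteq B(x,\delta/3)$; and (iii) the uniform continuity of $f$ on the compact square $I\times I$, which yields $\epsilon>0$ such that $f(I,t')\subseteq B(f(I,t),\delta/3)$ for all $t'\in[t-\epsilon,t]$, giving $V\cap f(I,t')=\emptyset$ by the triangle inequality.

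Using such a neighborhood I then show that $C(t)$-membership is constant on $V$. If $x\in C(t)$ with evasion path $\gamma\colon[0,t]\to D$, continuity of $\gamma$ gives $\epsilon'>0$ with $\gamma(t')\in V$ for $t'\in[t-\epsilon',t]$; after shrinking $\epsilon$ to $\min(\epsilon,\epsilon')$ and choosing any $x'\in V$, I define $\gamma^*\colon[0,t]\to D$ to follow $\gamma$ on $[0,t-\epsilon]$ and then to traverse a continuous path inside $V$ from $\gamma(t-\epsilon)$ to $x'$ on $[t-\epsilon,t]$. By construction $\gamma^*$ is a continuous evasion path ending at $x'$: on the initial subinterval because $\gamma$ was already evading, and on the terminal subinterval because its image lies in $V$, which is disjoint from $f(I,t')$ there. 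Hence $x'\in C(t)$ and therefore $V\subseteq C(t)$. If instead $x\in U(t)$, then no point of $V$ can lie in $C(t)$, for if some $x'\in V$ did, applying the previous construction with $x'$ in the role of $x$ (using the same $V$, which is also a neighborhood of $x'$) would place $x$ in $C(t)$.

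The main obstacle I anticipate is simply the bookkeeping across the two time scales---the temporal argument of the sensor curve $f$ and the parametrization of the modified path $\gamma^*$---together with the mild technicality that $x$ might lie on $\partial D$, which is precisely why I invoke local path-connectedness of the Jordan domain in place of ordinary Euclidean balls when producing $V$. Once both $C(t)$ and $(D\setminus f(I,t))\setminus C(t)$ are seen to be open in $D\setminus f(I,t)$, the lemma follows from the standard fact that a connected set cannot be split into two disjoint nonempty relatively open pieces, applied to the path-component of $D\setminus f(I,t)$ containing $x$ and $x'$.
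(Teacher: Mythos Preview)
Your proof is correct and rests on the same analytic core as the paper's: uniform continuity of $f$ on the compact square yields a time window $[t-\epsilon,t]$ during which the sensor curve stays away from a fixed region near $x$, and an existing evasion path is then rerouted inside that region to end at a new point. The organization differs, however. The paper argues directly: it takes the given path $\beta$ from $x$ to $x'$ in $D\setminus f(I,t)$, finds $\delta>0$ with $\beta(I)\cap f(I,[t-\delta,t])=\emptyset$, and splices $\beta$ onto the tail of the evasion path in one step. You instead prove a local statement---every point of $D\setminus f(I,t)$ has a neighborhood on which $C(t)$-membership is constant---and then invoke connectedness of the path-component. Your packaging is slightly more careful about the possibility $x\in\partial D$ (you use local path-connectedness of the Jordan domain rather than straight-line segments), and it has the pleasant side effect of establishing the openness of $C(t)$ in $D$, which the paper proves separately as Lemma~\ref{lem:closed-open}. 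The paper's route is marginally shorter because it exploits the connecting path $\beta$ handed to it by hypothesis rather than rebuilding connectivity from local data.
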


\begin{proof}
Suppose for a contradiction that $x\in U(t)$ but $x'\notin U(t)$. Since $x'\in C(t)$, there exists an evasion path $\gamma : [0,t] \to D$ with $\gamma(t)=x'$. Since $x$ and $x'$ are in the same path-connected component, there exists a path $\beta\colon I \to D\setminus f(I,t)$ with $\beta(0)=x$ and $\beta(1)=x'$.

Note that $\beta(I)$ and $f(I,t)$ are compact, since they are each a continuous image of the compact set $I$. As any metric space is normal, there exist disjoint neighborhoods containing $\beta(I)$ and $f(I,t)$. Because $f$ is uniformly continuous (it is a continuous function on a compact set), we can choose $\delta_1 > 0$ such that $f(I,[t-\delta_1,t])$ remains in this open neighborhood disjoint from $\beta(I)$, giving
\begin{equation}\label{eq:beta}
\beta(I) \cap f(I,[t-\delta_1,t])=\emptyset.
\end{equation}    

Since metric space $D$ is normal, there exist disjoint neighborhoods containing $\gamma(t)=x'$ and $f(I,t)$. Since $\gamma$ is continuous and $f$ is uniformly continuous, we can choose $\delta_2 > 0$ such that $\gamma([t-\delta_2,t])$ and $f(I,[t-\delta_2,t])$ remain in these disjoint neighborhoods, giving
\begin{equation}\label{eq:gamma}
\gamma([t-\delta_2,t]) \cap f(I,[t-\delta_2,t])=\emptyset
\end{equation}

Let $\delta=\min\{\delta_1,\delta_2\}$. Using \eqref{eq:beta} and \eqref{eq:gamma} we can define an evasion path $\gamma\colon[0,t]\to D$ with $\gamma(t)=x$. Indeed, let
\[ \gamma(t')=\begin{cases}
\gamma(t')&\mbox{if }t'\le t-\delta\\
\gamma(2t'-t+\delta)&\mbox{if }t-\delta<t'\le t-\frac{\delta}{2}\\
\beta(\frac{2}{\delta}(t'-t+\frac{\delta}{2}))&\mbox{if }t-\frac{\delta}{2}<t'\le t.
\end{cases} \]
This contradicts the fact that $x\in U(t)$.
\end{proof}

\begin{lemma}\label{lem:closed-open}
 For all $t\in I$, the set $U(t)$ is closed and the set $C(t)$ is open in $D$.
\end{lemma}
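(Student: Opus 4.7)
The plan is to deduce Lemma~\ref{lem:closed-open} directly from Lemma~\ref{lem:connected} by a local-path-connectedness argument, avoiding any explicit re-construction of evasion paths. Since $U(t)=D\setminus C(t)$, it suffices to show that $C(t)$ is open in $D$.

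First I would fix $x\in C(t)$ with an evasion path $\gamma\colon[0,t]\to D$ ending at $x$. The definition of evasion path forces $x=\gamma(t)\notin f(I,t)$, so $x$ lies in the set $D\setminus f(I,t)$. The set $f(I,t)$ is compact as the image of the compact interval $I$ under the continuous map $f(\cdot,t)$, hence closed in $D$, so $D\setminus f(I,t)$ is open in $D$.

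Next I would invoke local path-connectedness: a Jordan domain $D$ is, by definition, homeomorphic to a closed disk, which is locally path-connected, and local path-connectedness is preserved by homeomorphisms and passes to open subspaces. Thus $D\setminus f(I,t)$ is locally path-connected, so one can find an open neighborhood $V$ of $x$ in $D\setminus f(I,t)$ that is path-connected. By construction $V$ is open in $D$ and every point of $V$ lies in the same path-connected component of $D\setminus f(I,t)$ as $x$.

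Finally I would apply Lemma~\ref{lem:connected}: since $x\in C(t)$, i.e.\ $x\notin U(t)$, every $x'\in V$ satisfies $x'\notin U(t)$, i.e.\ $x'\in C(t)$. Hence $V\subseteq C(t)$, which exhibits $C(t)$ as a neighborhood of each of its points, proving it is open. I do not anticipate any real obstacle here; the work has already been done in Lemma~\ref{lem:connected}, and the only subtlety worth spelling out is the justification that $D$ (and therefore the open subspace $D\setminus f(I,t)$) is locally path-connected, which is where the Jordan-domain hypothesis is implicitly used.
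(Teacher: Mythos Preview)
Your proof is correct and follows essentially the same approach as the paper: both show $C(t)$ is open by taking $x\in C(t)$, producing a path-connected neighborhood of $x$ inside $D\setminus f(I,t)$, and then invoking Lemma~\ref{lem:connected}. The only difference is that the paper uses the explicit ball $B(x,\varepsilon)$ with straight-line segments, whereas you appeal to the abstract local path-connectedness of $D$; your version is arguably a bit more careful for points $x\in\partial D$ when $D$ is non-convex, where the straight segment from $x$ to a nearby $x'\in D$ need not remain in $D$.
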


\begin{proof}
Suppose $x\in C(t)$. Since $f(I,t)$ is closed and $x\notin f(I,t)$, there exists some $\varepsilon > 0$ such that $B(x,\varepsilon) \cap f(I,t) = \emptyset.$ Note all $x'\in B(x,\varepsilon)$ are in the same path-connected component of $D\setminus f(I,t)$ as $x$ via a straight line path. Hence Lemma~\ref{lem:connected} implies $B(x,\varepsilon)\subseteq C(t)$, showing $C(t)$ is open in $D$. It follows that $U(t)=D\setminus C(t)$ is closed in $D$.
\end{proof}



\begin{lemma}\label{lem:homeo}
If $h\colon D\to h(D)$ is a homeomorphism onto its image $h(D)\subseteq \R^2$, then a sensor curve $f\colon I\times I\to D$ is a sweep of $D$ if and only if sensor curve $hf\colon I\times I\to h(D)$ is a sweep of $h(D)$.
\end{lemma}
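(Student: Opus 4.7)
The plan is to observe that being a sweep is a purely topological condition --- it asks only about the existence of continuous intruder paths that avoid the image of the sensor curve at each time --- and that homeomorphisms bijectively carry such paths across.

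First I would fix notation and set up the correspondence between intruders. Let $h\colon D\to h(D)$ be a homeomorphism, and let $f\colon I\times I\to D$ be a sensor curve. Because $h$ is continuous, $hf\colon I\times I\to h(D)$ is continuous; and for each $t\in I$ we have $(hf)(I,t)=h(f(I,t))$. Since $h$ is a bijection, $h^{-1}$ is well-defined, and because $h$ is a homeomorphism, $h^{-1}$ is continuous as well. Thus the assignment $\gamma\mapsto h\circ\gamma$ is a bijection between continuous paths $\gamma\colon[0,t]\to D$ and continuous paths $\tilde{\gamma}\colon[0,t]\to h(D)$, with inverse $\tilde{\gamma}\mapsto h^{-1}\circ\tilde{\gamma}$.

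Next I would show the evasion-path conditions match. For a continuous path $\gamma\colon[0,t]\to D$ and any $t'\in[0,t]$, injectivity of $h$ gives
\[
\gamma(t')\notin f(I,t')\quad\Longleftrightarrow\quad h(\gamma(t'))\notin h(f(I,t'))=(hf)(I,t').
\]
Therefore $\gamma$ is an evasion path for $f$ over $[0,t]$ if and only if $h\circ\gamma$ is an evasion path for $hf$ over $[0,t]$. Combined with the bijection above, the full-time evasion paths in $D$ (against $f$) are in bijection with full-time evasion paths in $h(D)$ (against $hf$) via $\gamma\leftrightarrow h\circ\gamma$.

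Finally I would conclude. By the definition preceding the lemma, $f$ is a sweep of $D$ precisely when no continuous intruder $\gamma\colon I\to D$ is an evasion path over all of $I$, and likewise $hf$ is a sweep of $h(D)$ precisely when no continuous intruder $I\to h(D)$ is an evasion path over all of $I$. The bijection established in the previous step shows these two conditions are equivalent, proving the lemma. I do not expect any genuine obstacle: the only point worth a second's care is that both $h$ and $h^{-1}$ must be continuous, which is exactly the assumption that $h$ is a homeomorphism onto its image, and that the correspondence respects the relation ``$\gamma(t')\in f(I,t')$'' via injectivity of $h$.
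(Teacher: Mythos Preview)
Your proof is correct and follows essentially the same approach as the paper's: both observe that evasion paths correspond bijectively under $h$ and $h^{-1}$, so $f$ admits a full-time evasion path if and only if $hf$ does. You simply spell out in more detail the continuity and injectivity facts that make this correspondence work.
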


\begin{figure}[h]
	\def\svgwidth{3in}
	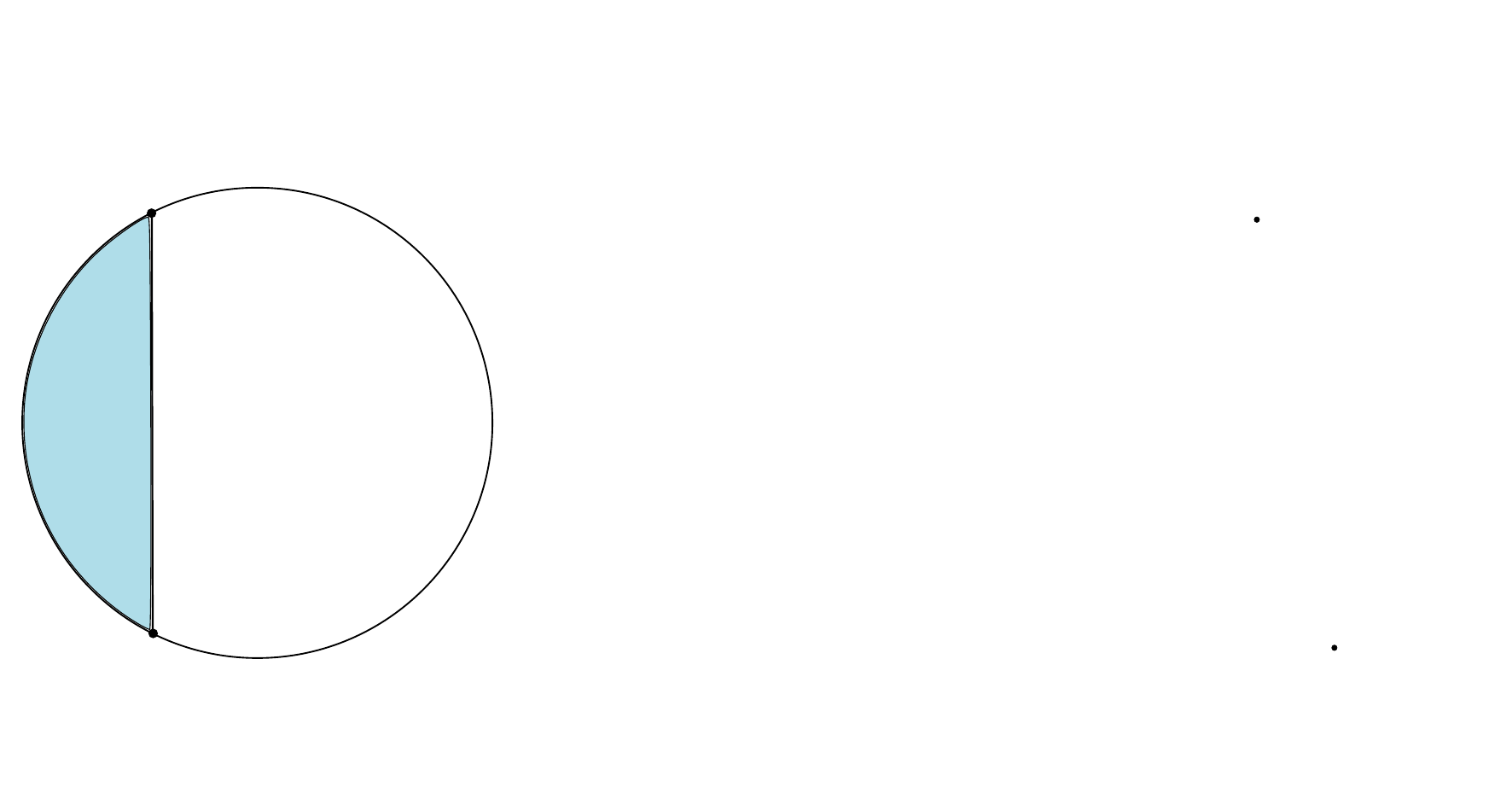
\caption{A homeomorphism $h\colon D\to h(D)$.}
\label{fig:homeomorphism}
\end{figure}

\begin{proof}
Note that if $\gamma\colon I\to D$ is an evasion path for $f$, then $h\gamma\colon I\to h(D)$ is an evasion path for $hf$. Conversely, if $\gamma\colon I\to h(D)$ is an evasion path for $hf$, then $h^{-1}\gamma\colon I\to D$ is an evasion path for $f$.
\end{proof}

\section{A lower bound on the sweeping cost}\label{sec:lower-bound}

In this section we prove that the sweeping cost of a Jordan domain is at least as large as the length of the shortest area-bisecting curve. The first lemma is a version of the intermediate value theorem with slightly relaxed hypotheses.

\begin{lemma}\label{lem:gen-intermediate-value}
If $f\colon[a,b]\to\R$ is upper semi-continuous and left continuous and if $f(a)<u<f(b)$, then there exists some $c\in(a,b)$ with $f(c)=u$.
\end{lemma}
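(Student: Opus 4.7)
The plan is to apply the standard supremum trick from the proof of the intermediate value theorem, with care taken about which hypothesis handles which side. Define
\[ c = \sup\bigl\{x\in[a,b]~|~f(x)\le u\bigr\}, \]
so that $c$ is well-defined since $a$ lies in the set. The goal is to verify $c\in(a,b)$ and then sandwich $f(c)$ between $u$ and $u$.

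To see $c>a$, I would apply upper semi-continuity at $a$ with $\varepsilon = u - f(a)>0$: this yields a neighborhood of $a$ on which $f(x) < f(a)+\varepsilon = u$, contributing points strictly larger than $a$ to the set. To see $c<b$, I would argue by contradiction: if $c=b$, then picking a sequence $x_n\uparrow b$ with $f(x_n)\le u$ and invoking left continuity at $b$ gives $f(b)=\lim f(x_n)\le u$, contradicting $f(b)>u$.

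Next I plan to show $f(c)\le u$ using left continuity. Either $c$ itself lies in the defining set (in which case we are done), or there is a sequence $x_n\in[a,c)$ with $x_n\to c$ and $f(x_n)\le u$; left continuity at $c$ then gives $f(c)=\lim f(x_n)\le u$. Conversely, to show $f(c)\ge u$, I would use upper semi-continuity: if $f(c)<u$, then applying the definition with $\varepsilon = u-f(c)>0$ produces a neighborhood of $c$ on which $f(x)<u$, yielding points $x>c$ in the defining set and contradicting that $c$ is its supremum.

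The main obstacle is keeping the asymmetry of the hypotheses straight. Left continuity is responsible only for the inequality $f(c)\le u$ (it controls limits from the left, which is where the approximating points in the sup live), while upper semi-continuity is responsible only for $f(c)\ge u$ (it rules out a strict drop at $c$ that would allow the sup to be exceeded on the right). Each hypothesis is genuinely needed: a function that jumps upward at $c$ and is right continuous could skip the value $u$ entirely, showing that neither pure lower semi-continuity nor pure right continuity would suffice.
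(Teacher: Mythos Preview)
Your proof is correct and follows essentially the same supremum argument as the paper: both take the supremum of a sub-$u$ set, use left continuity to obtain $f(c)\le u$, and use upper semi-continuity to obtain $f(c)\ge u$. The only cosmetic differences are that you use the non-strict set $\{x:f(x)\le u\}$ rather than the paper's strict $\{x:f(x)<u\}$, and you explicitly verify $c\in(a,b)$, which the paper leaves implicit.
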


\begin{proof}
Let $S$ be the set of all $x\in(a,b)$ with $f(x)< u$. Then $S$ is nonempty since $a\in S$, and $S$ is bounded above by $b$. Hence by the completeness of $\R$, the supremum $c=\sup S$ exists. We claim that $f(c)=u$.

Let $\epsilon>0$. Since $f$ is left-continuous, there is some $\delta>0$ such that $|f(x)-f(c)|<\epsilon$ whenever $x\in(c-\delta,c]$. By the definition of supremum, there exists some $y\in(c-\delta,c]$ that is contained in $S$, giving $f(c)<f(y)+\epsilon<u+\epsilon$. Since this is true for all $\epsilon>0$, it follows that $f(c)\le u$.

It remains to show $f(c)\ge u$. Let $\epsilon>0$. Since $f$ is upper semi-continuous, there exists a $\delta>0$ such that $f(c)>f(x)-\epsilon$ whenever $x\in(c-\delta,c+\delta)$. Let $y\in(c,c+\delta)$ and note that $y\notin S$, giving $f(c)>f(y)-\epsilon\ge u-\epsilon$. It follows that $f(c)\ge f(u)$.
\end{proof}

If $S\subseteq \R^2$ is a measurable set, then we let $\area(S)$ denote its area.

\begin{lemma}\label{lem:uppersemi-left-cont}
The function $\area(U(t))$ is upper semi-continuous and left continuous.
\end{lemma}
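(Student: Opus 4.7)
The plan is to pass to the complement $C(t)=D\setminus U(t)$ and deduce both assertions from Fatou's lemma and its reverse form applied to the indicator functions $\mathbf{1}_{C(t)}$. Since $\area(U(t))=\area(D)-\area(C(t))$, upper semi-continuity of $\area(U(t))$ at $t_0$ is equivalent to $\liminf_{t\to t_0}\area(C(t))\ge\area(C(t_0))$, while left continuity of $\area(U(t))$ amounts to the additional inequality $\limsup_{t\to t_0^-}\area(C(t))\le\area(C(t_0))$.

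The engine of the argument will be a pointwise claim: if $x\in C(t_0)$, then $x\in C(t)$ for every $t$ in some two-sided neighborhood of $t_0$. By the evasion condition any witnessing path $\gamma$ satisfies $x=\gamma(t_0)\notin f(I,t_0)$, and since $f(I,t_0)$ is closed, $r:=d(x,f(I,t_0))>0$. Uniform continuity of $f$ on the compact square $I\times I$ then forces $d(x,f(I,s))\ge r/2$ for all $s$ in some interval $[t_0-\eta,t_0+\eta]$. Fixing an evasion path $\gamma\colon[0,t_0]\to D$ with $\gamma(t_0)=x$, continuity of $\gamma$ confines $\gamma([t_0-\eta,t_0])$ to $B(x,r/4)$ after possibly shrinking $\eta$. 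For $t\in(t_0-\eta,t_0)$ I would build an evasion path reaching $x$ at time $t$ by concatenating $\gamma|_{[0,t_0-\eta]}$ with a time-reparametrization of $\gamma|_{[t_0-\eta,t_0]}$ onto the shorter interval $[t_0-\eta,t]$; for $t\in(t_0,t_0+\eta)$, extending $\gamma$ by the constant value $x$ on $[t_0,t]$ suffices. In both cases the detour lives inside $B(x,r/4)$ while the sensor stays outside $B(x,r/2)$, so the sensor is avoided.

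The pointwise claim immediately yields $\liminf_{t\to t_0}\mathbf{1}_{C(t)}(x)\ge\mathbf{1}_{C(t_0)}(x)$ everywhere on $D$, and Fatou's lemma along an arbitrary sequence $t_n\to t_0$ gives upper semi-continuity of $\area(U(t))$. For left continuity I still need the complementary bound $\limsup_{t\to t_0^-}\mathbf{1}_{C(t)}(x)\le\mathbf{1}_{C(t_0)}(x)$ for almost every $x$. If $x\notin f(I,t_0)$ and $x\in C(t_n)$ for some $t_n\uparrow t_0$, the corresponding evasion paths $\gamma_n\colon[0,t_n]\to D$ can each be extended by the constant value $x$ on $[t_n,t_0]$, which remains an evasion path for $n$ large because $x$ has positive distance to $f(I,s)$ for $s$ in a neighborhood of $t_0$; this forces $x\in C(t_0)$. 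The leftover exceptional set $f(I,t_0)$ is either a single point or the image of a rectifiable curve, so it has planar Lebesgue measure zero, and reverse Fatou (valid because $\mathbf{1}_{C(t)}\le 1$) then delivers $\limsup_{t\to t_0^-}\area(C(t))\le\area(C(t_0))$.

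The delicate step I expect is the splicing construction in the pointwise claim: one must simultaneously control the spatial radius $r/2$, the time window on which the sensor remains outside $B(x,r/2)$, and the confinement of the detour to $B(x,r/4)$, and then verify that the piecewise-defined path avoids the moving sensor at every intermediate time at once. The essential ingredient there is uniform continuity of $f$ on $I\times I$, which is what makes a single $\eta$ work for all $s$ simultaneously; once this construction is in place, everything reduces to Fatou's lemma together with the elementary observation that $f(I,t_0)$ has planar measure zero.
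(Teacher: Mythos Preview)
Your argument is correct and in fact slightly cleaner than the paper's. Both proofs rest on the same splicing constructions---extending an evasion path by a constant to move forward in time, and time-reparametrizing the tail of $\gamma$ inside a small ball $B(x,r/4)$ (while the sensor stays outside $B(x,r/2)$) to move backward---and both invoke a Fatou-type lemma together with the fact that the rectifiable image $f(I,t_0)$ has Lebesgue measure zero. The difference is organizational: the paper treats right upper semi-continuity separately via the containment $U(t)\subseteq U(t_0)\cup f(I,[t_0,t])$ and then bounds $\area\bigl(f(I,[t_0,t])\bigr)$ using Hotelling's tube inequality, whereas you observe that your pointwise claim ``$x\in C(t_0)$ implies $x\in C(t)$ for all nearby $t$'' holds on a \emph{two-sided} neighborhood, so a single application of Fatou's lemma gives upper semi-continuity outright. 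This lets you dispense with the Hotelling estimate entirely. The left-continuity halves are essentially identical once one unwinds the complement: the paper's containments $U(t_0)\setminus f(I,t_0)\subseteq\liminf_i U(s_i)$ and $\limsup_i U(s_i)\subseteq U(t_0)$ are exactly the contrapositives of your two pointwise bounds on $\mathbf 1_{C(t)}$.
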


\begin{proof}
Let $t_0\in I$. We will show that $\area(U(t))$ is right upper semi-continuous and left continuous at $t_0$, which implies the function is both upper semi-continuous and left continuous.

For right upper semi-continuity, note for $t\ge t_0$ we have
\begin{equation}\label{eq:contain}
U(t)\subseteq U(t_0)\cup f(I,[t_0,t]).
\end{equation}
Since sensor curve $f\colon I\times I\to D$ is a continuous function on a compact domain, it is also uniformly continuous. Hence for all $\epsilon>0$ there exists some $\delta$ such that
\begin{equation}\label{eq:in-ball}
f(I,[t_0,t_0+\delta])\subseteq B(f(I,t_0),\epsilon).
\end{equation}
It follows that for all $t\in[t_0,t_0+\delta]$ we have
\begin{align*}
\area(U(t))-\area(U(t_0))&\le\area(f(I,[t_0,t]))&&\mbox{by \eqref{eq:contain}}\\
&\le\area(B(f(I,t_0),\epsilon))&&\mbox{by \eqref{eq:in-ball}}\\
&\le 2L(f(I,t_0))\epsilon+\pi\epsilon^2,
\end{align*}
where the last inequality is by a result of Hotelling (see for example \cite[Equation~(2.1)]{johnstone1989hotelling}). Hence $\area(t)$ is right upper semi-continuous.

To see that $\area(U(t))$ is left continuous at $t_0\in I$, we must show that for all sequences $\{s_i\}$ with $0\le s_i\le t_0$ and $\lim_i s_i=t_0$, we have $\lim_i\area(U(s_i))=\area(U(t_0))$.
We claim
\begin{equation}\label{eq:containments}
U(t_0)\setminus f(I,t_0)\subseteq\liminf_i U(s_i)\subseteq\limsup_i U(s_i)\subseteq U(t_0),
\end{equation}
where the middle containment is by definition. 
We now justify the first and last containment.

To prove $U(t_0)\setminus f(I,t_0)\subseteq\liminf_i U(s_i)$ it suffices to show that for any $x \in U(t_0) \setminus f(I,t_0)$ there exists an $\epsilon >0$ such that $x \in U(t_0 - \delta)$ for all $\delta \in [0,\epsilon)$.  Fix $\epsilon$ such that $x \notin f(I,t)$ for $t \in (t_0-\epsilon, t_0]$. Suppose for a contradiction that $x\in C(t_0-\delta)$ for some $\delta \in [0,\epsilon)$.  Hence there exists an evasion path $\gamma \colon [0,t_0-\delta] \to D$ with $\gamma(t_0-\delta) = x$. It is possible to extend $\gamma$ to an evasion path $\tilde{\gamma}\colon[0,t_0]\to D$ defined by
\[ \tilde{\gamma}(t)=\begin{cases}
\gamma(t)&\mbox{if }t\in[0,t_0-\delta]\\
x&\mbox{if }t\in(t_0-\delta,t_0].
\end{cases} \]
This contradicts the fact $x \in U(t_0)$, thus giving the first containment.

We now show $\limsup_i U(s_i)\subseteq U(t_0)$. If $x\notin U(t_0)$, then there exists an evasion path $\gamma\colon[0,t_0]\to D$ with $\gamma(t_0)=x$. Since $C(t_0)$ is open by Lemma~\ref{lem:closed-open}, there exists some $\delta>0$ such that $B(x,\delta)\subseteq C(t_0)$, and hence $B(x,\delta)\cap f(I,t_0)=\emptyset$. Since $f$ is uniformly continuous, there is some $\epsilon_1>0$ sufficiently small with $B(x,\delta)\cap f(I,[t_0-\epsilon_1,t_0])=\emptyset$, and since $\gamma$ is continuous there is some $\epsilon_2>0$ with $\gamma([t_0-\epsilon_2,t_0])\subseteq B(x,\delta)$. Let $\epsilon=\min\{\epsilon_1,\epsilon_2\}$. Reparametrize $\gamma$ to get a continuous curve $\tilde{\gamma}\colon[0,t_0]\to D$ with
\[ \tilde{\gamma}(t)=\begin{cases}
\gamma(t)&\mbox{if }x\in[0,t_0-\epsilon)\\
\gamma(t)\in B(x,\delta)&\mbox{if }t\in[t_0-\epsilon,t_0-\epsilon/2)\\
\gamma(t)=x&\mbox{if }t\in[t_0-\epsilon/2,t_0].
\end{cases} \]
The evasion path $\tilde{\gamma}$ shows $x\notin\limsup_i U(s_i)$, giving the third containment and finishing the proof of \eqref{eq:containments}.

Set $f(I,t_0)$ has Lebesgue measure zero since curve $f(\cdot,t_0)$ is rectifiable, giving $\area(U(t_0)\setminus f(I,t_0))=\area(U(t_0))$. Thus \eqref{eq:containments} implies
\[ \area(\limsup_i U(s_i))=\area(U(t_0))=\area(\liminf_i U(s_i)). \]
Since $\area(D)$ is finite, Lemma~\ref{lem:measures} implies $\limsup_i\area(U(s_i))\le\area(\limsup_i U(s_i))$ and $\area(\liminf_i U(s_i))\le\liminf_i\area(U(s_i))$, giving 
\[ \limsup_i\area(U(s_i))\le\area(U(t_0))\le\liminf_i\area(U(s_i)). \]
Hence $\lim_i\area(U(s_i))=\area(U(t_0))$ as required.
\end{proof}

\begin{remark}
The function $\area (U(t))$ need not be right continuous. Indeed, consider a sensor curve as shown below in Figure~\ref{fig:failed_sweep}, where $\area(U(t_0))>0$, and where there is some $\epsilon_0>0$ such that for all $0<\epsilon<\epsilon_0$, only one point on the sensor curve at time $t_0+\epsilon$ intersects $\partial D$ and $\area(U(t_0+\epsilon))=0$.

\begin{figure}[h]
	\def\svgwidth{6.5in}
	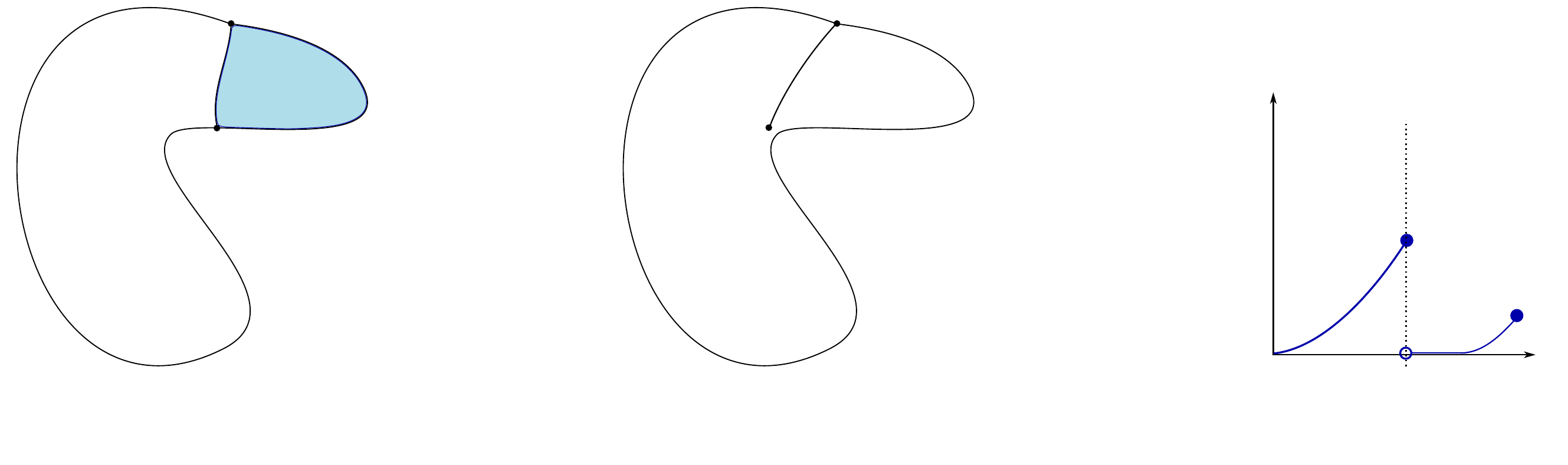
\caption{An example sensor curve where the function $\area(U(t))$ is not right continuous. The shaded region is $U(t)$ and the unshaded region is $C(t)$.}
\label{fig:failed_sweep}
\end{figure}
\end{remark}

As a consequence we obtain the following lower bound on the sweeping cost.

\begin{theorem}\label{thm:area-lower-bound}
If $D$ is a Jordan domain, then the sweeping cost $\SC(D)$ is at least as large as the length of the shortest area-bisecting curve in $D$.
\end{theorem}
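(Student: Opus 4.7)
The strategy is to fix an arbitrary sweep $f\in\cF(D)$ and exhibit an explicit area-bisecting curve in $D$ whose length is at most $L(f)$, namely the time-slice $f(\cdot,t^*)$ for a carefully chosen $t^*\in(0,1)$. Taking the infimum over $f\in\cF(D)$ then delivers the claimed lower bound on $\SC(D)$.

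First I would establish the boundary values $\area(U(0))=0$ and $\area(U(1))=\area(D)$. The value at $t=0$ follows because an ``evasion path'' $\gamma\colon[0,0]\to D$ is just a point with $\gamma(0)\notin f(I,0)$, so $C(0)=D\setminus f(I,0)$ and $U(0)=f(I,0)$ is a single point by Definition~\ref{def:sensor-curve}(ii); the value at $t=1$ is built into the definition of a sweep. Since Lemma~\ref{lem:uppersemi-left-cont} shows that $\area(U(\cdot))$ is upper semi-continuous and left continuous, I would apply Lemma~\ref{lem:gen-intermediate-value} with $u=\area(D)/2$ to produce some $t^*\in(0,1)$ with $\area(U(t^*))=\area(D)/2$.

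Next I would argue that the time-slice $f(\cdot,t^*)$ is itself an area-bisecting curve of $D$. Because $t^*\in(0,1)$, assumptions (i) and (iii) of Definition~\ref{def:sensor-curve} imply that $f(\cdot,t^*)$ is a rectifiable injective arc whose two distinct endpoints $f(0,t^*)$ and $f(1,t^*)$ lie on $\partial D$ and whose interior lies in $\interior(D)$. The Jordan arc theorem, applied inside the topological closed disk $D$, then partitions $D\setminus f(I,t^*)$ into exactly two path-connected components $R_1$ and $R_2$. By Lemma~\ref{lem:connected}, each $R_i$ is entirely contained in $U(t^*)$ or entirely in $C(t^*)$. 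Since the rectifiable arc $f(I,t^*)$ has Lebesgue measure zero and $\area(U(t^*))=\area(D)/2$, the two components cannot both lie in the same set, so one component equals $C(t^*)$ and the other equals $U(t^*)\setminus f(I,t^*)$, each of area $\area(D)/2$.

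Finally, $L(f(\cdot,t^*))\le\max_{t\in I}L(f(\cdot,t))=L(f)$, so the length of the shortest area-bisecting curve of $D$ is at most $L(f)$; taking the infimum over $f\in\cF(D)$ yields the theorem. The main obstacle is the Jordan-arc step: one has to confirm that the slice actually separates $D$ into precisely two regions and then use Lemma~\ref{lem:connected} to identify which side is contaminated, so that the area split comes out exactly at the chosen $t^*$. Assumption (iii) of Definition~\ref{def:sensor-curve}, which keeps the interior of the arc off $\partial D$ at intermediate times, is what makes this Jordan-arc picture clean; everything else is bookkeeping on the two intermediate-value lemmas already established.
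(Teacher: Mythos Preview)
Your outline is essentially the paper's proof: pick a sweep $f$, use Lemmas~\ref{lem:gen-intermediate-value} and~\ref{lem:uppersemi-left-cont} to locate $t^*$ with $\area(U(t^*))=\tfrac12\area(D)$, and observe that the slice $f(\cdot,t^*)$ bisects the area, hence has length at least that of the shortest area-bisecting curve. The paper states this last step in one line; you have usefully unpacked it via Lemma~\ref{lem:connected} and a Jordan-arc separation argument.

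There is one genuine slip. You assert that assumptions (i) and (iii) of Definition~\ref{def:sensor-curve} force the endpoints $f(0,t^*),f(1,t^*)$ to lie on $\partial D$. They do not: condition (iii) is the one-way implication ``$f(s,t)\in\partial D\Rightarrow s\in\{0,1\}$ or $t\in\{0,1\}$'', which keeps the \emph{interior} of the arc off $\partial D$ but says nothing about where the endpoints land. (Indeed, the paper only adds the converse assumption later, at the start of Section~\ref{sec:jordan}.) The fix is easy and fits your framework: if an endpoint is \emph{not} on $\partial D$, then $f(I,t^*)$ fails to separate $D$ (this is the content of Lemma~\ref{lem:no-progress}, or one can argue directly via Lemma~\ref{lem:zoretti} that $D\setminus f(I,t^*)$ is path-connected), so by Lemma~\ref{lem:connected} we would have $\area(U(t^*))\in\{0,\area(D)\}$, contradicting the choice of $t^*$. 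With that correction your Jordan-arc picture goes through as written.
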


\begin{proof}
Suppose that $f$ is a sweep of $D$. Note that $\area(U(0))=0$ and $\area(U(1))=\area(D)$. By Lemmas~\ref{lem:gen-intermediate-value} and \ref{lem:uppersemi-left-cont}, there exists some time $t'\in I$ with $\displaystyle \area(U(t'))=\tfrac{1}{2}\area(D)$. So $L(f(\cdot,t'))$ and hence $\SC(D)$ is at least as large as the shortest area-bisecting curve in $D$.
\end{proof}

\begin{example}\label{example:disk}
If $D = \{(x,y)\in\R^2~|~x^2+y^2\le1\}$ is the unit disk, then $\SC(D)=2$.
\end{example}

\begin{proof}
To see $\SC(D)\le2$, consider the sweep $f\colon[-1,1]\times I\to D$ defined by $f(s,t)=(2s \sqrt{t-t^2},2t-1)$ (Figure~\ref{fig:disk-sweep}) which has length $2$.

\begin{figure}[h]
	\def\svgwidth{6.5in}
	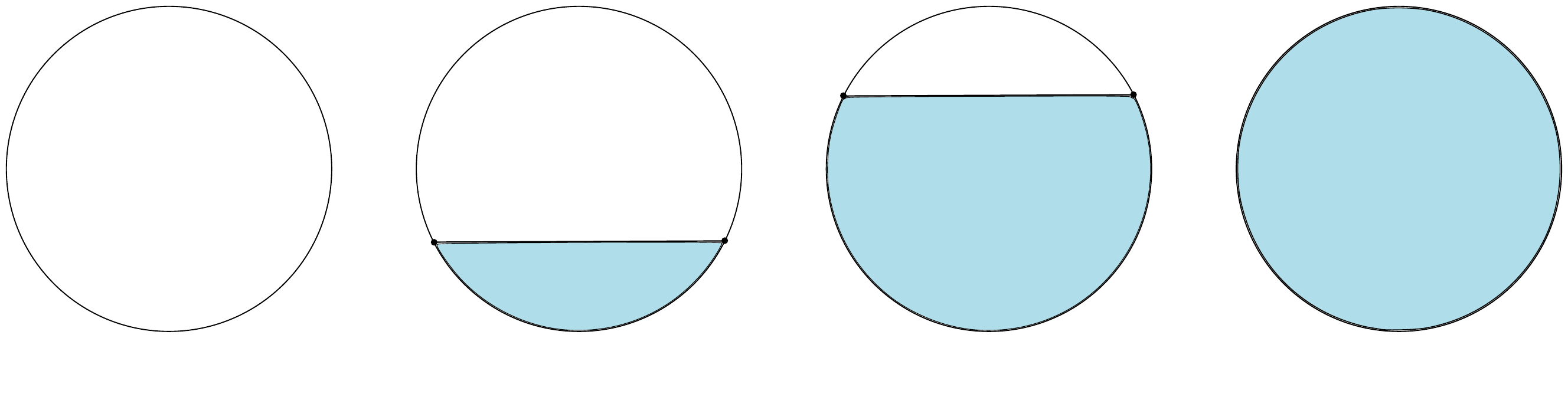
\caption{A sweep of the unit disk. The shaded region is $U(t)$ and the unshaded region is $C(t)$.}
\label{fig:disk-sweep}
\end{figure}

For the reverse direction, note that the shortest area-bisecting curve in $D$ is a diameter \cite{esposito2012longest}. Hence we apply Theorem~\ref{thm:area-lower-bound} to get $\SC(D)\ge2$.
\end{proof}

\begin{example}
Let $a,b>0$. If $D=\{(x,y)\in\R^2~|~(x/a)^2+(y/b)^2\le1\}$ is the convex hull of an ellipse, then $\SC(D)=\min\{2a,2b\}$.
\end{example}

\begin{proof}
To see $\SC(D)\le\min\{2a,2b\}$, construct a sweep much like in Example~\ref{example:disk}.

For the reverse direction, the solution to \cite[Chapter~X, Problem~33]{polya1965mathematics} states that because the ellipse has a center of symmetry, the shortest area-bisecting curve is a straight line. All area-bisecting lines pass through the center of the ellipse, and hence have length at least $\min\{2a,2b\}$.
It follows from Theorem~\ref{thm:area-lower-bound} that $\SC(D)\ge\min\{2a,2b\}$.
\end{proof}


\section{A lemma of no progress}

In Sections~\ref{sec:jordan}--\ref{sec:extremal} we will restrict attention to sensor curves $f$ with boundary points $f(0,t),f(1,t)\in\partial D$ for all $t\in I$. The motivation behind this assumption is Lemma~\ref{lem:no-progress}, which states that if $f(0,t)\notin\partial D$ or $f(1,t)\notin\partial D$, then the uncontaminated region at time $t$ is as small as possible, namely $U(t)=f(I,t)$.

The following lemma is from \cite{zoretti1905fonctions}; see also its statement in \cite[page 164]{kline1928separation}.

\begin{lemma}[Zoretti]\label{lem:zoretti}
If $K$ is a bounded maximal connected subset of a plane closed set $M$ and $\epsilon>0$, then there exists a simple closed curve $J$ enclosing $K$ such that $J\cap M=\emptyset$ and $J\subseteq B(K,\epsilon)$.
\end{lemma}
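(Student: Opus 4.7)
My plan is to trap $K$ inside a small clopen subset of $M$ near $K$, and then enclose that subset by a polygonal Jordan curve built from a sufficiently fine square grid. The essential non-elementary input is the classical fact that connected components coincide with quasi-components in any compact metric space.

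First I would pass to a compact ambient set. As a component of the closed set $M$, $K$ is closed, and being bounded it is compact. Set $M' := M \cap \overline{B(K,\epsilon/4)}$. Then $K$ is still a component of $M'$: any connected subset of $M'$ containing $K$ is also a connected subset of $M$, hence equals $K$ by maximality. The coincidence of components and quasi-components in the compact set $M'$ gives, for each point $p$ of the compact set $F := M' \setminus B(K,\epsilon/8)$, a clopen subset $U_p \subseteq M'$ with $K \subseteq U_p$ and $p \notin U_p$. I would extract a finite subcover of $F$ by the open sets $M' \setminus U_p$; the intersection of the corresponding $U_p$'s is a clopen set $L \subseteq M'$ with $K \subseteq L \subseteq B(K,\epsilon/8)$. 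In particular, $L$ and $M' \setminus L$ are disjoint compact sets, at positive distance $\delta > 0$ apart.

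Next, I would overlay $\R^2$ with a square grid of side length $s < \min(\delta/3,\, \epsilon/(8\sqrt{2}))$ and let $N$ be the union of the closed grid squares that meet $L$. Then $N$ is a compact polygonal region lying in $B(K,\epsilon/4)$, and the choice of $s$ forces $N \cap (M' \setminus L) = \emptyset$; combined with $N \cap (M \setminus M') = \emptyset$, this yields $N \cap M \subseteq L$. Each point of $L$ is surrounded by grid squares all of which meet $L$ and hence lie in $N$, so $L$ is contained in the interior of $N$ and $\partial N \cap L = \emptyset$. Thus $\partial N \cap M = \emptyset$. Letting $N_0$ be the connected component of $N$ containing $K$, I would take $J$ to be the boundary of the unbounded component of $\R^2 \setminus N_0$: a polygonal simple closed curve enclosing $K$, lying in $\partial N \subseteq \R^2 \setminus M$, and contained in $B(K,\epsilon)$.

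The main obstacle is ensuring that $J$ really is a simple closed curve rather than a polygonal graph with degree-four vertices, which can arise at shared grid corners where two diagonally opposite squares lie in $N$ and the other two do not. I would address this by first translating the grid by a generic small vector so that $L$ meets no grid vertex, and then performing a local surgery at any remaining diagonal crossings---for example, rounding the corner of $N$ by detaching a small triangular sliver from one of the two offending squares. These modifications preserve the containments $L \subseteq \interior(N)$, $\partial N \cap M = \emptyset$, and $N \subseteq B(K,\epsilon)$, and they make $\partial N$ a disjoint union of simple closed polygons, the outer one around the component containing $K$ being the desired curve $J$.
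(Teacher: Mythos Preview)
The paper does not supply its own proof of this lemma; it simply quotes the result from Zoretti's 1905 paper (with a pointer to Kline's 1928 restatement). So there is no in-paper argument to compare your proposal against.

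Your argument is essentially the classical proof and is correct. The reduction to a compact piece $M'$ so that components and quasi-components agree, the extraction of a clopen neighbourhood $L$ of $K$ inside $B(K,\epsilon/8)$ via a finite intersection of clopen sets, and the grid approximation $N$ with the stated mesh bound all go through, and your distance bookkeeping is accurate (in particular $N\subseteq B(K,\epsilon/8+s\sqrt{2})\subseteq B(K,\epsilon/4)$ and $N\cap M\subseteq L\subseteq\interior(N)$). Two small comments on the last paragraph. First, the translation step does not by itself rule out degree-four boundary vertices: such a vertex occurs when two diagonally opposite squares meet $L$ and the other two do not, which can happen whether or not $L$ touches the vertex; so the surgery is doing all the work. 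Second, detaching a sliver may split $N_0$ into several pieces, but since $K$ is connected and lies in $\interior(N)$ it remains in a single piece, whose outer boundary still encloses $K$, misses $M$ (the cut is made in a neighbourhood of $\partial N$, which is at positive distance from $M$), and stays inside $B(K,\epsilon)$. An alternative that avoids disconnection is to \emph{add} a small square at the crossing so the vertex becomes interior; the same distance arguments show this preserves all three required properties.
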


\begin{lemma}\label{lem:no-progress}
Let $f\colon I\times I\to D$ be a sensor curve. If $f(0,t)\notin\partial D$ or $f(1,t)\notin\partial D$ and $U(t)\neq D$, then $U(t)=f(I,t)$.
\end{lemma}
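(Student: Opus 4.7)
My plan is to reduce the claim to showing that $D\setminus f(I,t)$ is path-connected, after which the conclusion follows quickly from Lemma~\ref{lem:connected}. I first observe that $f(I,t)\subseteq U(t)$ always, since any evasion path $\gamma$ satisfies $\gamma(t')\notin f(I,t')$ by definition, forcing $C(t)\cap f(I,t)=\emptyset$ and hence $C(t)\subseteq D\setminus f(I,t)$. The hypothesis $U(t)\neq D$ gives $C(t)\neq\emptyset$, so there is some point in $C(t)\cap(D\setminus f(I,t))$. If $D\setminus f(I,t)$ is path-connected, then Lemma~\ref{lem:connected} forces every point of $D\setminus f(I,t)$ to lie in $C(t)$, which gives $U(t)\subseteq f(I,t)$; combined with the inclusion above, this yields $U(t)=f(I,t)$.

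The substantive step is to show the simple arc $A:=f(I,t)$ does not separate $D$. By conditions (i) and (iii) of Definition~\ref{def:sensor-curve}, $A$ is injective with $A\cap\partial D\subseteq\{f(0,t),f(1,t)\}$, and by the hypothesis of the lemma at most one of $f(0,t),f(1,t)$ lies in $\partial D$, so $|A\cap\partial D|\le 1$. After applying Lemma~\ref{lem:homeo} to reduce to the case $D=\overline{\mathbb{D}}$, I would proceed as follows. First, Zoretti's Lemma~\ref{lem:zoretti} applied with $K=M=A$ produces, for each small $\epsilon>0$, a Jordan curve $J$ enclosing $A$ with $J\cap A=\emptyset$ and $J\subseteq B(A,\epsilon)$; the exterior of $J$ is connected and the set $\mathrm{int}(J)\setminus A$ (an open topological disk with a simple arc whose endpoints lie strictly inside removed) is connected as well, so $\R^2\setminus A$ is connected. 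Thus for any $x,y\in D\setminus A$ we may choose a continuous path $\gamma\colon[0,1]\to\R^2\setminus A$ joining them. The final step is to modify $\gamma$ so that it lies in $D$: on each maximal subinterval where $\gamma$ leaves $D$, the two endpoints map into $\partial D\setminus A$, and I replace the portion of $\gamma$ there with a path along $\partial D\setminus A$ between those same two endpoints.

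The key point enabling this boundary replacement is that $|A\cap\partial D|\le 1$ implies $\partial D\setminus A$ is path-connected: it is either the full topological circle $\partial D$ or a circle with one point removed (an open arc). I expect the main obstacle to be carrying out the rerouting cleanly—verifying continuity of the modified path, handling the possibility of countably many excursions outside $D$ consistently, and checking each replacement path actually lies in $D\setminus A$. These issues are mild when $A\cap\partial D=\emptyset$ (where any arc on $\partial D$ may be used), but require more care when $A$ shares an endpoint $q$ with $\partial D$; in that case one picks, for each excursion with endpoints $a,b\in\partial D\setminus\{q\}$, the arc of $\partial D$ from $a$ to $b$ that avoids $q$, which exists because $q\notin\{a,b\}$.
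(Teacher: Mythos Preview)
Your reduction to showing that $D\setminus f(I,t)$ is path-connected, together with the observation $f(I,t)\subseteq U(t)$, is exactly the paper's first move. The difference lies in how path-connectedness is established.

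The paper splits into two cases. When $A\cap\partial D=\emptyset$ it applies Zoretti's lemma with $K=A$ and $M$ the union of $A$, $\partial D$, and the two target points, obtaining a Jordan curve $J\subseteq D$ that separates $A$ from both points and from $\partial D$; one then routes via $J$. When $A$ meets $\partial D$ in the single point $f(1,t)$, the paper picks an interior point $y\in D\setminus(A\cup\{x,x'\})$, performs the planar inversion $i$ centered at $y$, and applies Zoretti to the bounded connected set $i(A\cup\partial D)$: the resulting Jordan curve encloses that set but not $i(x),i(x')$, so these lie in the connected exterior and can be joined there; inverting the joining path back lands in $\mathrm{int}(D)\setminus A$. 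The inversion trick neatly avoids any boundary-rerouting bookkeeping.

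Your route---cite that a simple arc does not separate $\R^2$, take a path in $\R^2\setminus A$, and push its excursions outside $D$ onto $\partial D\setminus A$---is valid and more elementary, and your observation that $\partial D\setminus A$ is path-connected because $|A\cap\partial D|\le 1$ is the right reason the rerouting succeeds. Two small comments. First, your invocation of Lemma~\ref{lem:homeo} is a slight misfire: that lemma concerns sweeps, not connectedness; the reduction to the disk is legitimate simply because path-connectedness and the relevant incidence with $\partial D$ are preserved by any homeomorphism (and in fact the reduction is unnecessary, since nothing in your argument uses the specific geometry of the disk). Second, your Zoretti sketch for ``arcs do not separate the plane'' quietly assumes that $\mathrm{int}(J)\setminus A$ is connected, which is of the same flavor as the statement you want; better to cite the classical fact directly. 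The countably-many-excursions continuity check you flag does go through: at an accumulation time $t_0$ one has $\gamma(t_0)\in\partial D\setminus\{q\}$, so nearby excursion endpoints lie in a short arc about $\gamma(t_0)$ missing $q$, and the $q$-avoiding replacement arcs shrink accordingly.
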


\begin{proof}
Without loss of generality suppose $f(0,t)\notin \partial D$. It suffices to show that $D\setminus f(I,t)$ is a single path-connected component, because then Lemma~\ref{lem:connected} and the fact that $U(t)\neq D$ will imply $C(t)=D\setminus f(I,t)$ and hence $U(t)=f(I,t)$. Let $x,x'\in D\setminus f(I,t)$; we must find a path in $D\setminus f(I,t)$ connecting $x$ and $x'$. There are two cases: when $f(1,t)\notin\partial D$, and when $f(1,t)\in\partial D$.

In the first case $f(1,t)\notin\partial D$, note that $f(I,t)$ is disjoint from $\partial D$. Hence by compactness there exists some $\epsilon>0$ such that $d(f(I,t),\partial D\cup\{x,x'\})<\epsilon$. By Lemma~\ref{lem:zoretti} (with $K=f(I, t)$ and $M=\partial D\cup\{x,x'\}$), there exists a simple closed curve $J$ in $D$ enclosing $f(I, t)$ but not enclosing $x$ or $x'$. We may therefore connect $x$ and $x'$ by a path in $D\setminus f(I,t)$ consisting of three pieces: a path in $D$ from $x$ to $J$, a path in $D$ from $x'$ to $J$, and a path in $J$ connecting these two endpoints (Figure~\ref{fig:no-progress-1}).

\begin{figure}[h]
	\def\svgwidth{6.5in}
	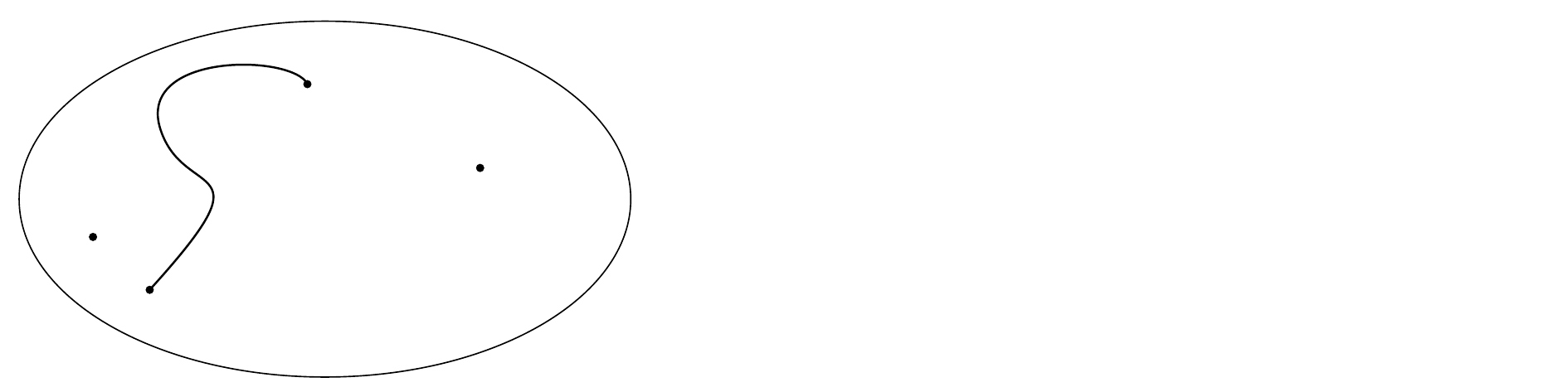
\caption{The first case in the proof of Lemma~\ref{lem:no-progress}, with $J$ drawn in red.}
\label{fig:no-progress-1}
\end{figure}

In the second case $f(1,t)\in\partial D$, pick some point $y\in D\setminus(f(I,t)\cup\{x,x'\})$. By translating $D$ in the plane we may assume that $y=\vec{0}$. Define the inversion function $i\colon\R^2\setminus\{\vec{0}\}\to\R^2\setminus\{\vec{0}\}$ by $i(r\cos\theta,r\sin\theta)=(\frac{1}{r}\cos\theta,\frac{1}{r}\sin\theta)$; note $i^2$ is the identity map. Let $\epsilon>0$ be such that
\[d(i(f(I, t)\cup \partial D),\{i(x),i(x')\})<\epsilon.\] By Lemma~\ref{lem:zoretti} (with $K=i(f(I, t)\cup \partial D)$ and $M=\{i(x),i(x')\}$), there exists a simple closed curve $J$ in $\R^2$ enclosing $i(f(I, t)\cup \partial D)$ but not enclosing $i(x)$ or $i(x')$. By the Jordan curve theorem, $i(x)$ and $i(x')$ are in the same (exterior) connected component $E$ of $\R^2\setminus J$. Since $E$ is open it is also path-connected,
and hence we can connect $i(x)$ and $i(x')$ by a path $\gamma$ in $E$. The path $i(\gamma)$ is therefore a path in $D\setminus f(I,t)$ connecting $x$ and $x'$ (Figure~\ref{fig:no-progress-2}).

\begin{figure}[h]
	\def\svgwidth{6.5in}
	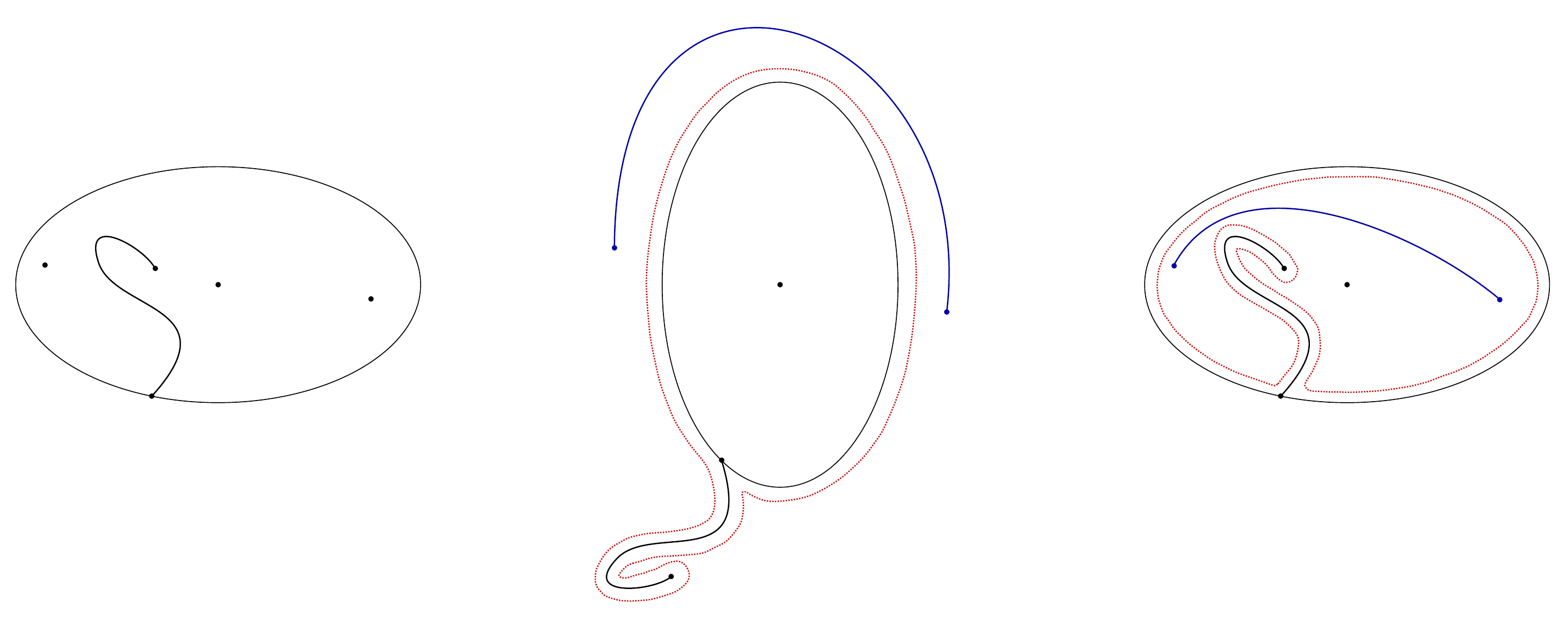
\caption{The second case in the proof of Lemma~\ref{lem:no-progress}, with $J$ and $i(J)$ drawn in red.}
\label{fig:no-progress-2}
\end{figure}
\end{proof}


\section{Sweeping cost of a Jordan domain}\label{sec:jordan}

As motivated by Lemma~\ref{lem:no-progress}, for the remainder of the paper we restrict attention to sensor curves satisfying $f(s,t)\in\partial D$ if and only if $s\in\{0,1\}$ or $t\in\{0,1\}$.

Given curves $\alpha,\beta\colon I\to\partial D$ with $\alpha(1)=\beta(0)$ (see Figure~\ref{fig:alpha_beta_winding}), we define the concatenated curve $\alpha\cdot\beta\colon I\to\partial D$ by
\[ \alpha\cdot\beta(t)=\begin{cases}
\alpha(2t) &\mbox{if }0\le t\le \frac{1}{2}\\
\beta(2t-1) &\mbox{if }\frac{1}{2}<t\le 1.
\end{cases} \]
We define the inverse curve $\beta^{-1}\colon I\to\partial D$ by $\beta^{-1}(t)=\beta(1-t)$.

\begin{figure}[h]
	\def\svgwidth{1.5in}
	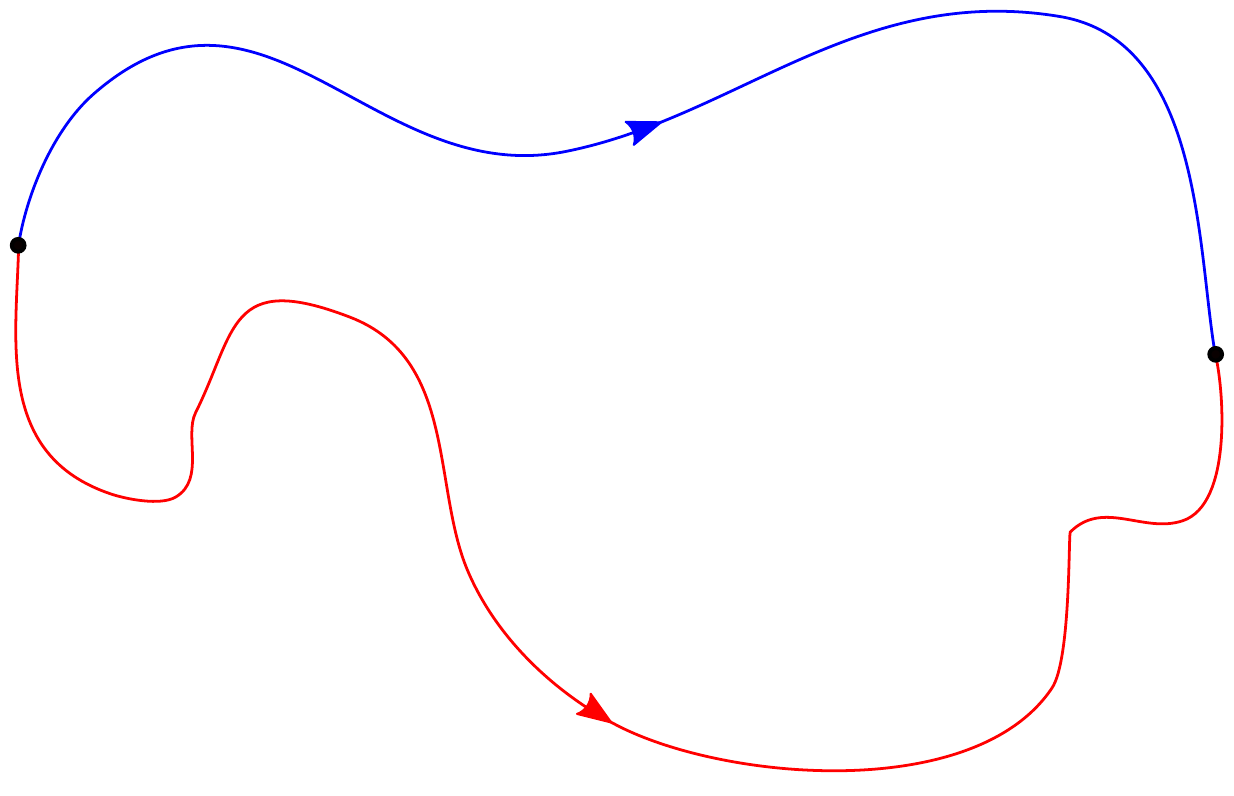
	\hspace{15mm}
	\def\svgwidth{4in}
	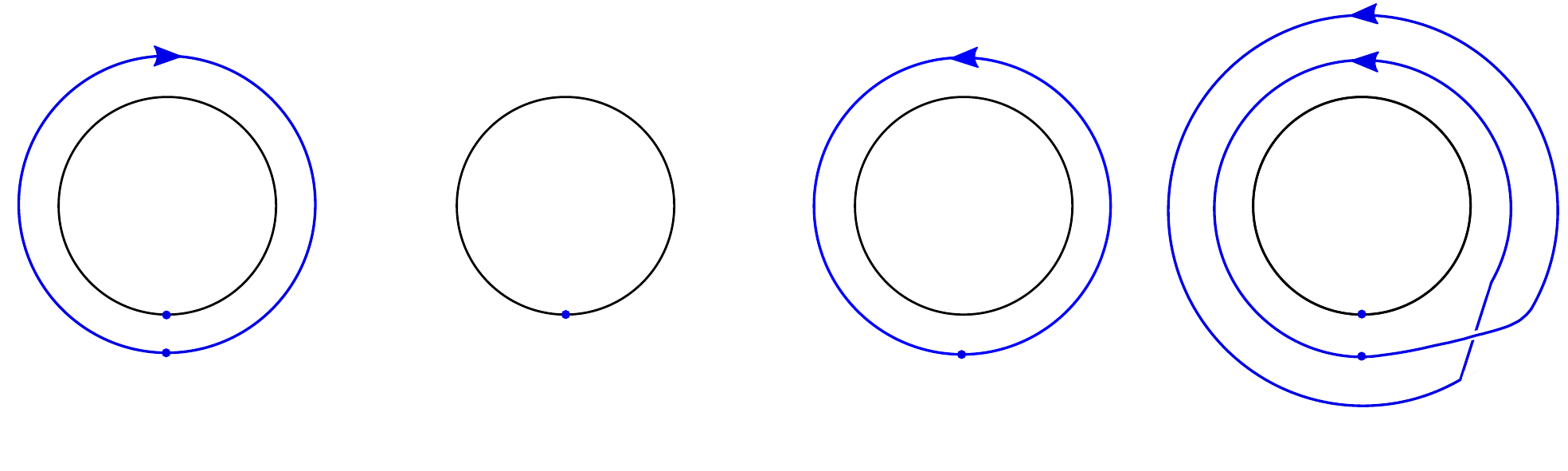
\caption{(Left) Two curves $\alpha,\beta\colon I\to\partial D$ with $\alpha(1)=\beta(0)$. (Right) Example winding numbers.}
\label{fig:alpha_beta_winding}
\end{figure}

Since $\partial D$ is homeomorphic to the circle, given a loop $\gamma\colon I\to\partial D$ (with $\gamma(0)=\gamma(1)$) we can denote the winding number of $\gamma$, i.e.\ the number of times $\gamma$ wraps around $\partial D$, by $\wn(\gamma)$. The winding number is positive (resp.\ negative) for loops that wrap around in the counterclockwise (resp.\ clockwise) direction. Note that if $\alpha$ and $\beta$ are paths in $\partial D$ with $\alpha(0)=\beta(0)$ and $\alpha(1)=\beta(1)$, then $\alpha\cdot\beta^{-1}$ is a loop.

Our main result is an analytic formula for the sweeping cost of a Jordan domain.

\begin{theorem}\label{thm:frechet} 
The sweeping cost of a Jordan domain $D$ is
\begin{equation}\label{eq:frechet}
\SC(D)=\inf\{d_L(\alpha,\beta)~|~\alpha,\beta\colon I\to\partial D,\ \alpha(0)=\beta(0),\ \alpha(1)=\beta(1),\ \wn(\alpha\cdot\beta^{-1})\neq0\}.
\end{equation}
\end{theorem}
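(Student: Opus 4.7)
The plan is to prove the two inequalities separately. For the upper bound $\SC(D)\le$ RHS, I would construct a sensor curve sweep from any admissible $(\alpha,\beta)$ pair in the infimum. Given $\alpha,\beta\colon I\to\partial D$ with matching endpoints and $\wn(\alpha\cdot\beta^{-1})\ne 0$, define $f(\cdot,t)\colon I\to D$ to be the unit-speed reparametrization (on $I$) of the unique shortest geodesic in $D$ from $\alpha(t)$ to $\beta(t)$, with the convention that it is the constant map when $\alpha(t)=\beta(t)$. Continuity of $f$ in both coordinates follows from continuous dependence of shortest geodesics on their endpoints in a Jordan domain, which I expect to be one of the technical lemmas of the appendix. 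Then $L(f(\cdot,t))=d_L(\alpha(t),\beta(t))\le d_L(\alpha,\beta)$, so $L(f)\le d_L(\alpha,\beta)$. A small homotopy may be required to push interior points of the geodesics off $\partial D$ (which can occur in non-convex $D$) so that condition~(iii) of Definition~\ref{def:sensor-curve} is satisfied; the perturbation can be taken arbitrarily small. Verifying that $f$ is a sweep reduces to checking $U(1)=D$, which I would establish by tracking the uncontaminated region: at each $t\in(0,1)$ the sensor curve $f(\cdot,t)$ is a simple arc dividing $D$ into two components, and the ``already-swept'' component grows monotonically from a single point at $t=0$ and, thanks to the nonzero winding number, reaches all of $D$ by $t=1$.

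For the lower bound $\SC(D)\ge$ RHS, let $f$ be any sweep of $D$ and set $\alpha(t):=f(0,t)$ and $\beta(t):=f(1,t)$. These take values in $\partial D$ by the standing assumption at the start of Section~\ref{sec:jordan}, and satisfy $\alpha(0)=\beta(0)$ and $\alpha(1)=\beta(1)$ by condition~(ii) of Definition~\ref{def:sensor-curve}. Since $f(\cdot,t)$ is a path in $D$ from $\alpha(t)$ to $\beta(t)$, we have $d_L(\alpha(t),\beta(t))\le L(f(\cdot,t))\le L(f)$, and taking the maximum over $t$ yields $d_L(\alpha,\beta)\le L(f)$. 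It remains to show $\wn(\alpha\cdot\beta^{-1})\ne 0$; this will be the main obstacle of the proof.

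I would argue $\wn(\alpha\cdot\beta^{-1})\ne 0$ by contradiction. Assuming $\wn=0$, the goal is to produce an evasion path and contradict the sweep property. My approach is to track the \emph{swept arc} $A(t):=U(t)\cap\partial D$ in the universal cover of $\partial D\cong S^1$. Using that $f(\cdot,t)$ is a simple arc with interior in $\interior(D)$ together with Lemma~\ref{lem:connected}, I would show that $A(t)$ is an arc of $\partial D$ with endpoints $\alpha(t)$ and $\beta(t)$ whose angular measure begins at $0$ (since $A(0)=\{\alpha(0)\}$) and must equal $2\pi$ at $t=1$ (since $U(1)=D$ forces $A(1)=\partial D$). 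Lifting $\alpha,\beta$ to continuous paths $\tilde\alpha,\tilde\beta\colon I\to\R$ starting at a common lift of $\alpha(0)=\beta(0)$, the interval lift of $A(t)$ has length $|\tilde\beta(t)-\tilde\alpha(t)|$. But $\wn=0$ forces $\tilde\alpha(1)=\tilde\beta(1)$, giving lifted length $0$ at $t=1$ rather than $2\pi$, the contradiction. The main subtleties are rigorously identifying the continuous lift of the endpoints of $A(t)$ with the canonical lifts $\tilde\alpha,\tilde\beta$ rather than with translates $\tilde\alpha+2\pi k$, and handling times when $A(t)$ degenerates to two points or fills $\partial D$, during which the uncontaminated ``interior'' component of $D\setminus f(I,t)$ may switch sides.
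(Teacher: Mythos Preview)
Your overall architecture matches the paper's: prove the two inequalities separately, build the sweep from geodesics for the upper bound, and read off the boundary curves of a sweep for the lower bound. The differences lie in how each direction is actually carried out, and in both places your sketch leaves real work undone that the paper handles with a specific device.

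\textbf{Upper bound.} Your construction of $f(\cdot,t)$ as the geodesic from $\alpha(t)$ to $\beta(t)$ is exactly what the paper does. But your proposed verification that $f$ is a sweep---``the already-swept component grows monotonically and, thanks to the nonzero winding number, reaches all of $D$''---is precisely the nontrivial claim, and tracking $U(t)$ directly is delicate because $U(t)$ need not grow monotonically (recontamination is possible). The paper instead proves a clean lemma (Lemma~\ref{lem:frechet-le}): \emph{any} sensor curve with endpoint curves $\alpha,\beta$ satisfying $\wn(\alpha\cdot\beta^{-1})\ne 0$ is a sweep. The proof transports the problem to the unit disk via a homeomorphism, extends each $f(\cdot,t)$ by two arcs in $\R^2\setminus\D$ to a closed curve $g(\cdot,t)$ through a fixed basepoint, and defines a \emph{signed} distance $d^\pm(x,t)$ to $g(S^1,t)$ whose sign is governed by the winding number of $g(\cdot,t)$ about $x$. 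For any intruder $\gamma$ one has $d^\pm(\gamma(0),0)\ge 0$ and $d^\pm(\gamma(1),1)\le 0$ (the latter using $\wn(\alpha\cdot\beta^{-1})\ne 0$), so the intermediate value theorem produces a capture time. This completely sidesteps any analysis of $U(t)$.

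\textbf{Lower bound.} You aim to prove the stronger implication ``$f$ is a sweep $\Rightarrow\wn(\alpha\cdot\beta^{-1})\ne 0$'' by tracking the arc $A(t)=U(t)\cap\partial D$ and its lift. The subtleties you flag are genuine: $A(t)$ is governed by the two-dimensional dynamics of $U(t)$, which can jump (Lemma~\ref{lem:uppersemi-left-cont} gives only left continuity and upper semicontinuity of its area), so identifying the lifted endpoints of $A(t)$ with the canonical lifts $\tilde\alpha(t),\tilde\beta(t)$ is not automatic. The paper avoids this entirely. It assumes $L(f)<c$, so that $\wn(\alpha\cdot\beta^{-1})=0$ and, for every $t$ with $\alpha(t)=\beta(t)$, also $\wn(\alpha|_{[0,t]}\cdot\beta|_{[0,t]}^{-1})=0$ (else restricting to $[0,t]$ would already witness $c\le L(f)$). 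Then it passes to the purely \emph{one-dimensional} evasion problem on $\partial D$ with sensor set $\{\alpha(t),\beta(t)\}$. There the contaminated set is always a nonempty open arc with continuously moving endpoints $\alpha(t),\beta(t)$ (the partial-winding condition rules out the arc ever closing up), and its midpoint gives a continuous evasion path $\gamma\colon I\to\partial D$. Since the interior of $f(\cdot,t)$ never meets $\partial D$, this $\gamma$ is also a two-dimensional evasion path, so $f$ is not a sweep. The passage to the $1$-dimensional problem on $\partial D$ is the key simplification you are missing: it replaces the hard-to-control set $U(t)\cap\partial D$ by an arc whose evolution is driven only by the continuous boundary data $\alpha,\beta$.
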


Equation~\eqref{eq:frechet} is closely related to the \emph{geodesic width} between two polylines \cite{efrat2002new}, and also the \emph{isotopic Fr\'{e}chet distance} between two curves \cite{chambers2011isotopic}. Indeed, note that the right hand side of \eqref{eq:frechet} is unchanged if we replace $d_L(\alpha,\beta)$ with the weak geodesic Fr\'{e}chet distance
between $\alpha$ and $\beta$ (see Definition~\ref{def:geodesic-frechet}).

\begin{remark}\label{rem:frechet-unchanged}
The value of the right hand side of \eqref{eq:frechet} is unchanged if we replace $\wn(\alpha\cdot\beta^{-1})\neq0$ with $\wn(\alpha\cdot\beta^{-1})\in\{-1,1\}$.
\end{remark}

\begin{proof}[Proof of Remark~\ref{rem:frechet-unchanged}]
Let $\alpha,\beta\colon I\to\partial D$ with $\alpha(0)=\beta(0)$ and $\alpha(1)=\beta(1)$, and suppose $|\wn(\alpha\cdot\beta^{-1})|\ge2$.
Hence there exists some $0<t<1$ such that $\alpha(t)=\beta(t)$ and $\wn(\alpha|_{[0,t]}\cdot\beta|_{[0,t]}^{-1})\in\{-1,1\}$.
The claim follows since
\[ d_L(\alpha|_{[0,t]},\beta|_{[0,t]})\le d_L(\alpha,\beta). \]
\end{proof}

The following lemma will be used to prove the $\le$ direction in \eqref{eq:frechet}.

\begin{lemma}\label{lem:frechet-le}
Let $D$ be a Jordan domain. Suppose $\alpha,\beta\colon I\to\partial D$ with $\alpha(0)=\beta(0)$, $\alpha(1)=\beta(1)$, and $\wn(\alpha\cdot\beta^{-1})\neq 0$. If $f \colon I \times I \to D$ is any sensor curve with $f(0,t)=\alpha(t)$ and $f(1,t)=\beta(t)$, then $f$ is a sweep of $D$.
\end{lemma}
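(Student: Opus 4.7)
The plan is to argue by contradiction, assuming there exists an evasion path $\gamma\colon I \to D$ for $f$ (so $\gamma(t) \notin f(I,t)$ for every $t$) and producing a topological obstruction via a winding-number count. By the Schoenflies theorem there is a homeomorphism of $\R^2$ carrying $D$ onto the closed unit disk; Lemma~\ref{lem:homeo} lets me transport the sensor curve, the evasion path, and the curves $\alpha,\beta$ along this homeomorphism, and since a homeomorphism between circles preserves the vanishing of winding numbers, the hypothesis $\wn(\alpha \cdot \beta^{-1}) \ne 0$ survives. I may therefore assume without loss of generality that $D = \{x \in \R^2 \mid |x| \le 1\}$.

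The key construction is the continuous map $\Phi\colon I \times I \to \R^2 \setminus \{0\}$ defined by $\Phi(s,t) = f(s,t) - \gamma(t)$, whose codomain omits the origin precisely because $\gamma$ is an evasion path. Since $I \times I$ is a disk and $\Phi$ is defined on all of it, the loop $\Phi|_{\partial(I\times I)}$ extends to a map of the disk into $\R^2 \setminus \{0\}$ and hence has winding number $0$ around the origin. On the other hand, traversing $\partial(I\times I)$ counterclockwise, $\Phi$ is constant on the two horizontal edges (axiom (ii) of Definition~\ref{def:sensor-curve} makes $f(\cdot,0)$ and $f(\cdot,1)$ constant), traces $t \mapsto \beta(t) - \gamma(t)$ along $\{1\}\times I$, and traces $t \mapsto \alpha(t) - \gamma(t)$ in reverse along $\{0\}\times I$. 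I homotope this boundary loop within $\R^2 \setminus \{0\}$ via $\Phi_u(s,t) = f(s,t) - (1-u)\gamma(t)$ for $u \in [0,1]$; at $u = 1$ the loop becomes $\beta \cdot \alpha^{-1} = (\alpha \cdot \beta^{-1})^{-1}$ traced on the unit circle $\partial D$, which winds $-\wn(\alpha\cdot\beta^{-1}) \ne 0$ times around the origin. This contradicts the vanishing in the previous sentence and completes the proof.

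The main step to check is that the homotopy $\Phi_u$ genuinely lands in $\R^2 \setminus \{0\}$ on $\partial(I\times I)$ for every $u \in [0,1]$, which reduces to ruling out $(1-u)\gamma(t) \in \{\alpha(t),\beta(t),\alpha(0),\alpha(1)\}$. All four of these right-hand points lie on the unit circle $\partial D$, so have norm $1$, whereas $|(1-u)\gamma(t)| \le 1-u \le 1$; equality of norms forces $u = 0$, and then $(1-u)\gamma(t) = \gamma(t) \ne \alpha(t),\beta(t)$ is precisely the evasion condition $\gamma(t) \notin f(I,t)$ applied at the endpoints $s = 0$ and $s = 1$. The Schoenflies reduction is essential here: in an arbitrary Jordan domain no such linear contraction of $\gamma$ need land in $\R^2 \setminus \{0\}$.
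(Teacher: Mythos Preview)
Your proof is correct and takes a genuinely different route from the paper's. Both arguments begin by reducing to the unit disk via Lemma~\ref{lem:homeo}, but from there they diverge. The paper attaches, for each time $t$, exterior arcs from a fixed point $p\in\R^2\setminus\D$ to $\alpha(t)$ and to $\beta(t)$ (this is the content of the auxiliary Lemma~\ref{lem:disk-curves}), so that $f(\cdot,t)$ becomes part of a closed loop $g(\cdot,t)$ in $\R^2$; it then defines a signed distance from the intruder to this loop, governed by the winding number of $g(\cdot,t)$ around the intruder's position, and applies the intermediate value theorem directly to show every intruder is caught. You instead argue by contradiction: the map $\Phi=f-\gamma\colon I\times I\to\R^2\setminus\{0\}$ forces $\Phi|_{\partial(I\times I)}$ to be null-homotopic, while the linear homotopy $(1-u)\gamma$ slides that boundary loop to $\beta\cdot\alpha^{-1}$ on the unit circle, which has nonzero winding number about the origin. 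Your argument is shorter and sidesteps the construction in Lemma~\ref{lem:disk-curves}; the paper's version is a direct proof and makes the signed-distance picture explicit. Both exploit the special geometry of the disk in an essential way---you through the norm estimate $|(1-u)\gamma(t)|\le 1-u<1$ for $u>0$, the paper through the radial scaling in Lemma~\ref{lem:disk-curves}.
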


\begin{proof}
Let $\D=\{(x,y)\in\R^2~|~x^2+y^2\le1\}$ be the unit disk. We first prove this claim in the case when $D=\D$.

By Lemma~\ref{lem:disk-curves}, there exists a point $p\in\R^2\setminus\D$ and two continuous families of curves $g_\alpha,g_\beta\colon I\times I\to\R^2$ such that
\begin{itemize}
\item $g_\alpha(0,t)=p=g_\beta(0,t)$,
\item $g_\alpha(1,t)=\alpha(t)$,
\item $g_\beta(1,t)=\beta(t)$, and 
\item $g_\alpha(s,t),g_\beta(s,t)\notin\D$ for $s<1$.
\end{itemize}
Let $S^1$ be the circle of unit circumference, i.e.\ $[0,1]$ with endpoints $0$ and $1$ identified. Define a continuous map $g\colon S^1\times I\to D$ via
\[g(s,t)=\begin{cases}
g_\alpha(3st,t)&\mbox{if }0\le s<\frac{1}{3}\\
f(3s-1,t)&\mbox{if }\frac{1}{3}\le s<\frac{2}{3}\\
g_\beta(3t(1-s),t)&\mbox{if }\frac{2}{3}\le s\le 1.\\
\end{cases}\]
Note $g(\cdot,t)$ is indeed a (possibly non-simple) map from the circle since $g_\alpha(0,t)=p=g_\beta(0,t)$ for all $t$. Define a continuous signed distance $d^\pm \colon \R^2 \times I \to \R$ by 
\begin{equation*}
d^\pm(x,t)=\begin{cases}
d(x,g(S^1,t)) &\mbox{if }x\in g(S^1,t) \mbox{ or }\wn(g(\cdot,t),x)=0 \\
-d(x,g(S^1,t)) &\mbox{if }x\notin g(S^1,t) \mbox{ and }\wn(g(\cdot,t),x)\neq0.
\end{cases}
\end{equation*}
Here $\wn(g(\cdot,t),x)$ denotes (for $x\notin g(S^1,t)$) the winding number of the map $g(\cdot,t)\colon S^1\to\R^2\setminus\{x\}\simeq S^1$. Note that $\wn(g(\cdot,t),x)$ is constant on each connected component of $\R^2\setminus g(S^1,t)$, and that $d^\pm$ is continuous.

Given any intruder path $\gamma\colon I\to\D$, the continuous function $d^\pm(\gamma(t),t) \colon I \to \R$ satisfies $d^\pm(\gamma(0),0) \ge 0$ (since $f(I,0)$ is a single point in $\partial D$) and $d^\pm(\gamma(1),1) \le 0$ (since $f(I,1)$ is a single point in $\partial D$ and $\wn(\alpha\cdot\beta^{-1})\neq 0$). By the intermediate value theorem there exists some $t' \in I$ with $d^\pm(\gamma(t'),t')=0$, and hence $\gamma(t')\in g(S^1,t')\cap\D=f(I,t')$. So $\gamma$ is not an evasion path, and $f$ is a sweep of $\D$.

We now handle the case when $D$ is an arbitrary Jordan domain. By definition there exists a homeomorphism $h\colon\D\to D$. Note that $h^{-1}\alpha,h^{-1}\beta\colon I\to\partial \D$ with $h^{-1}\alpha(0)=h^{-1}\beta(0)$, $h^{-1}\alpha(1)=h^{-1}\beta(1)$, and $\wn(h^{-1}\alpha\cdot h^{-1}\beta^{-1})\neq 0$. Since $h^{-1}f \colon I \times I \to \D$ is a sensor curve, it follows from our proof in the case of the disk that $h^{-1}f$ is a sweep of $\D$. Hence $f$ is a sweep of $D$ by Lemma~\ref{lem:homeo}.
\end{proof}

\begin{proof}[Proof of Theorem~\ref{thm:frechet}]
Let
\[ c = \inf\{d_L(\alpha,\beta)~|~\alpha,\beta\colon I\to\partial D,\ \alpha(0)=\beta(0),\ \alpha(1)=\beta(1),\ \wn(\alpha\cdot\beta^{-1})\neq0\}. \]
We first prove the $\le$ direction of \eqref{eq:frechet}. Let $\epsilon>0$ be arbitrary. By the definition of infimum there exist curves $\alpha,\beta\colon I\to\partial D$ with $\alpha(0)=\beta(0)$, $\alpha(1)=\beta(1)$, $\wn(\alpha\cdot\beta^{-1})\neq0$, and $d_L(\alpha,\beta)\le c+\epsilon$. Define $f\colon I\times I\to D$ by letting $f(\cdot, t)\colon I\to D$ be the unique constant-speed geodesic in $D$ between $f(0,t)=\alpha(t)$ and $f(1,t)=\beta(t)$, which exists by \cite{bourgin1989shortest,bishop2005intrinsic}. 
Lemma~\ref{lem:frechet-le} implies that $f$ is a sweep, and hence we have
\[ \SC(D)\le L(f)=d_L(\alpha,\beta)\le c+\epsilon. \]
Since this is true for all $\epsilon>0$, we have $\SC(D)\le c$.

For the $\ge$ direction of \eqref{eq:frechet}, suppose that $f$ is a sensor curve with $L(f)<c$. For notational convenience, define $\alpha,\beta\colon I\to\partial D$ by $\alpha(t)=f(0,t)$ and $\beta(t)=f(1,t)$. Then necessarily $\wn(\alpha\cdot\beta^{-1})=0$, and furthermore
\begin{equation}\label{eq:partial-wn}
\alpha(t)=\beta(t)\quad\mbox{implies}\quad\wn(\alpha|_{[0,t]}\cdot\beta|_{[0,t]}^{-1})=0,
\end{equation}
since otherwise we'd have
\[ L(f)\ge d_L(\alpha,\beta) \ge d_L(\alpha|_{[0,t]},\beta|_{[0,t]})\ge c, \]
a contradiction. We will show that $f$ is not a sweep of $D$ by showing the existence of an evasion path $\gamma\colon I\to D$ whose image furthermore lives in $\partial D$.

Indeed, consider the 1-dimensional evasion problem in $\partial D$ where the region covered by the sensors at time $t$ is $\{\alpha(t),\beta(t)\}$. In this 1-dimensional problem, it is clear that the uncontaminated region in $\partial D$ is either (i) a single point $\alpha(t)=\beta(t)$, (ii) a closed interval in $\partial D$ with endpoints $\alpha(t)$ and $\beta(t)$, or (iii) all of $\partial D$. Equation~\eqref{eq:partial-wn}, however, rules out the possibility of (iii). It follows that the contaminated region in $\partial D$ is always a nonempty open interval in $\partial D$ with continuously varying endpoints $\alpha(t)$ and $\beta(t)$. Therefore we can define an evasion path $\gamma\colon I\to \partial D$, for example by letting $\gamma(t)$ be the midpoint of the open interval of the uncontaminated region in $\partial D$. This evasion path $\gamma$ is also an evasion path for our original 2-dimensional problem in $D$, as $\gamma\colon I\to \partial D\subseteq D$ satisfies $\gamma(t)\notin f(I,t)$ for all $t$. This gives the $\ge$ direction of \eqref{eq:frechet}.
\end{proof}

\begin{question}
Does Theorem~\ref{thm:frechet} hold even if assumption (iii) in Definition~\ref{def:sensor-curve} is removed, i.e.\ if the interior of a sensor curve is also allowed to touch $\partial D$?
\end{question}

\begin{question}\label{ques:weak-strong}
For any Jordan domain $D$ in the plane and injective curves $\alpha,\beta\colon I\to\partial D$, 
we conjecture that the weak geodesic Fr\'{e}chet distance between $\alpha$ and $\beta$ is equal to their strong geodesic Fr\'{e}chet geodesic distance.
\end{question}

There are simple counterexamples to Question~\ref{ques:weak-strong} when $\alpha$ and $\beta$ are not injective, or when they do not map to $\partial D$. Closely related is following question: is the value of \eqref{eq:frechet} unchanged if we require $\alpha$ and $\beta$ to be injective?

\section{Sweeping cost of a convex domain}\label{sec:convex}

Given a convex Jordan domain $D\subseteq \R^2$, its \emph{width} $w(D)$ is defined as
\[ w(D) = \min_{\|v\|=1}\ \max_{x\in\R^2}\ L(D\cap\{x+tv~|~t\in\R\}), \]
where $v$ is a unit direction vector in $\R^2$. Alternatively, the width $w(D)$ is the smallest distance between two parallel supporting lines on opposite sides of $D$ (Figure~\ref{fig:width}).

\begin{figure}[h]
	\def\svgwidth{2in}
	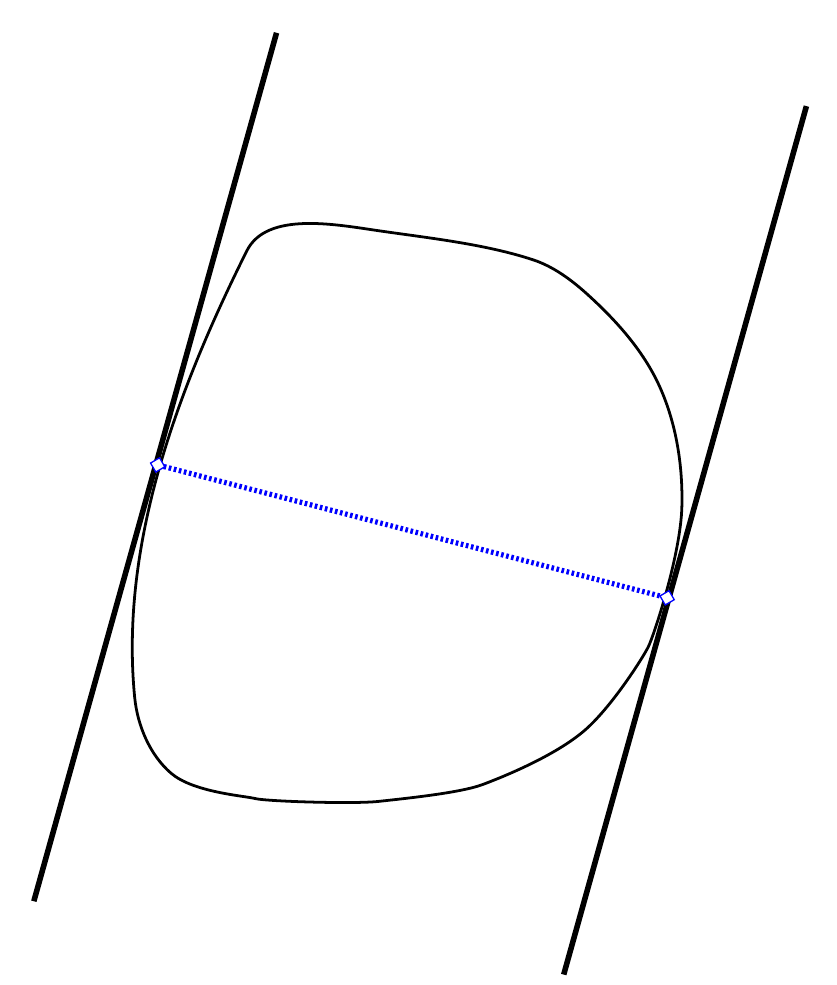
\caption{The width $w(D)$ of a domain $D$.}
\label{fig:width}
\end{figure}

\begin{theorem}\label{thm:sc=width}
If $D$ is a convex Jordan domain, then $\SC(D)=w(D)$.
\end{theorem}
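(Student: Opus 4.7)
The strategy is to apply Theorem~\ref{thm:frechet}, using convexity to replace the geodesic metric $d_L$ by the Euclidean metric $d$, and then to prove the two inequalities $\SC(D)\le w(D)$ and $\SC(D)\ge w(D)$ separately. By Remark~\ref{rem:frechet-unchanged} it suffices to consider pairs of curves $\alpha,\beta\colon I\to\partial D$ with $\alpha(0)=\beta(0)$, $\alpha(1)=\beta(1)$, and $\wn(\alpha\cdot\beta^{-1})=\pm 1$, so that $\alpha$ and $\beta$ trace the two arcs of $\partial D$ cut out by the common endpoints.

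For the upper bound $\SC(D)\le w(D)$, I would orient $D$ so that it lies in a horizontal strip of height $w(D)$ with the minimum-width direction vertical, and let $\alpha,\beta$ trace the upper and lower portions of $\partial D$ between the leftmost and rightmost points, synchronized by horizontal coordinate. The chord from $\alpha(t)$ to $\beta(t)$ is then vertical, so its Euclidean length is at most the vertical extent $w(D)$. Minor adjustments at the endpoints handle the case in which $\partial D$ has vertical flat segments at its leftmost or rightmost extremes, so that $\alpha(0)=\beta(0)$ and $\alpha(1)=\beta(1)$ can be chosen as single points without increasing the maximum chord length beyond $w(D)$.

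For the lower bound $\SC(D)\ge w(D)$, the key idea is an intermediate-value argument on the Gauss map. Parameterize $\partial D$ by arc length, let $L$ denote the perimeter, and let $\tilde\nu\colon\R\to\R$ be a continuous monotone lift of the outward-normal angle with $\tilde\nu(\theta+L)=\tilde\nu(\theta)+2\pi$. Lift $\alpha,\beta$ to $\tilde\theta_\alpha,\tilde\theta_\beta\colon I\to\R$ with $\tilde\theta_\alpha(0)=\tilde\theta_\beta(0)$; the winding-number hypothesis forces $\tilde\theta_\alpha(1)-\tilde\theta_\beta(1)=\pm L$. Then the continuous function $h(t)=\tilde\nu(\tilde\theta_\alpha(t))-\tilde\nu(\tilde\theta_\beta(t))$ satisfies $h(0)=0$ and $h(1)=\pm 2\pi$, so by the intermediate value theorem there is some $t^*$ with $h(t^*)=\pm\pi$. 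At this time the outward normals of $D$ at $\alpha(t^*)$ and $\beta(t^*)$ are antipodal, so the tangent lines at these two points are parallel supporting lines of $D$ on opposite sides. Their distance is at least $w(D)$ by the definition of width and equals the projection of the chord $[\alpha(t^*),\beta(t^*)]$ onto the common normal direction, which forces $d(\alpha(t^*),\beta(t^*))\ge w(D)$.

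The hard part is that this clean Gauss-map argument assumes that $D$ is strictly convex with $C^1$ boundary; for a general convex $D$ the outward normal is multi-valued at corners of $\partial D$ and is constant along any straight segments, so $\tilde\nu$ is only monotone with jumps and plateaus. I would remedy this by working with the outward normal as an upper semi-continuous correspondence on $\partial D$ and invoking a set-valued intermediate value theorem, which still produces some $t^*$ together with admissible normal selections at $\alpha(t^*)$ and $\beta(t^*)$ that are antipodal; the geometric conclusion then goes through unchanged. An alternative route is to approximate $D$ in Hausdorff distance by smooth strictly convex bodies $D_n$, apply the clean argument to each $D_n$, and pass to the limit using the continuity of $w$.
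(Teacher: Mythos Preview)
Your proposal is correct and follows essentially the same route as the paper. Both directions use Theorem~\ref{thm:frechet}; the upper bound is identical (sweep by lines in the width direction), and for the lower bound both you and the paper run an intermediate-value argument on the outward normal direction to locate a time at which $\alpha(t^*)$ and $\beta(t^*)$ support parallel lines on opposite sides of $D$, whence the chord has length at least $w(D)$.

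The only substantive difference is how the non-smooth case is handled. The paper carries out explicitly what you call the set-valued fix: it inserts, at each of the countably many vertices visited by $\alpha$ or $\beta$, a small extra time interval during which the point stays fixed while the supporting direction rotates through the normal cone. This produces genuinely continuous maps $g_\alpha,g_\beta\colon[0,1+w]\to\partial D\times S^1$ to which the ordinary intermediate value theorem applies. Your set-valued IVT suggestion is equivalent to this and would go through. Your alternative smooth-approximation route, however, has a gap as stated: you would need continuity of either $\SC(\cdot)$ or of the infimum in \eqref{eq:frechet} under Hausdorff convergence of convex bodies, and neither is established in the paper, so that line would require an additional lemma before it could close.
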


\begin{proof}
We first show $\SC(D)\le w(D)$. By Theorem~\ref{thm:frechet}, it suffices to show
\[ \inf\{d_L(\alpha,\beta)~|~\alpha,\beta\colon I\to\partial D,\ \alpha(0)=\beta(0),\ \alpha(1)=\beta(1),\ \wn(\alpha\cdot\beta^{-1})\neq0\} \le w(D). \]
Let $v$ some direction vector realizing the width, i.e.\ $w(D)=\max_{x\in\R^2}\ L(D\cap\{x+tv~|~t\in\R\})$. Consider sweeping through all lines in $\R^2$ parallel to $v$; the intersection of these lines with $\partial D$ traces out two continuous curves $\alpha,\beta\colon I\to\partial D$ with $\alpha(0)=\beta(0)$, $\alpha(1)=\beta(1)$, and $\wn(\alpha\cdot\beta^{-1})=\pm 1$.\footnote{This is not quite precise if $\partial D$ contains a straight line segment of non-zero length parallel to $v$ (there are at most two such segments). In this case, pick an arbitrary point on each such line segment; each such point will be either the starting point or the ending point for both $\alpha$ and $\beta$.} We have $d_L(\alpha,\beta)\le w(D)$, giving $\SC(D)\le w(D)$.

To finish the proof, we need some background on planar convex domains. A point $x\in\partial D$ is \emph{smooth} if it has a unique supporting hyperplane, and otherwise $x$ is a \emph{vertex} containing a range of angles $[\theta_1,\theta_2]\subseteq S^1$ (with $\theta_1\neq\theta_2$) which are the outward normal directions of supporting hyperplanes of $D$ at $x$. Away from the vertices, the unique supporting hyperplane of $x\in\partial D$ varies continuously with $x$. By \cite[Proposition~11.6.2]{berger2009geometry}, 
the set of vertices of the closed convex domain $D$ is countable.

We now show $\SC(D)\ge w(D)$. Given $\epsilon>0$, let $\alpha$ and $\beta$ be $\epsilon$-close to realizing the infimum in \eqref{eq:frechet}, meaning $\SC(D)+\epsilon\ge d_L(\alpha,\beta)$. For notational convenience we assume that $\alpha(0)=\beta(0)$ and $\alpha(1)=\beta(1)$ are not vertices of $\partial D$ (our same proof technique works regardless). Let $T=\{t_1,t_2,t_3,\ldots\}\subseteq I$ be a countable subset such that $t\in T$ if either $\alpha(t)$ or $\beta(t)$ is a vertex of $\partial D$. Let $t_0=0$, and if $|T|$ is finite, then let $t_{|T|+1}=1$. For $i=1,2,\ldots,|T|$, choose weights $w_i>0$ such that $\sum_i w_i=w<\infty$; this is possible since $T$ is countable. Let $p_1\colon \partial D \times S^1\to \partial D$ and $p_2\colon \partial D\times S^1\to S^1$ be the projection maps. It is possible to define a \emph{continuous} map $g_\alpha\colon [0,1+w]\to \partial D\times S^1$ satisfying the following properties.
\begin{itemize}
\item Each $g_\alpha(s)$ is equal to a point $(\alpha(t),v)\in\partial D\times S^1$ with $t\in I$ such that $v$ is the outward normal vector to a supporting hyperplane of $D$ at $\alpha(t)$.
\item If $t_i\le t\le t_{i+1}$, then $\alpha(t)=p_1g_\alpha(t+\sum_{j=1}^iw_j)$.
\item For all $0\le s\le w_i$, we have $p_1g_\alpha(s+t_i+\sum_{j=1}^{i-1}w_j)=\alpha(t_i)$.
\item As $s$ varies from $0$ to $w_i$, angle $p_2g_\alpha(s+t_i+\sum_{j=1}^{i-1}w_j)$ varies over the range of supporting hyperplanes of $D$ at $\alpha(t_i)$ (which may be a single angle if $\alpha(t_i)$ is not a vertex of $\partial D$).
\end{itemize}
Define $g_\beta\colon [0,1+w]\to \partial D\times S^1$ similarly (with $\alpha$ replaced everywhere by $\beta$). Note that $g_\alpha(0)=g_\beta(0)$, that $g_\alpha(1+w)=g_\beta(1+w)$, and that $p_2g_\alpha$ and $p_2g_\beta$ wrap in opposite directions around $S^1$. Hence for some $s\in[0,1+w]$ the supporting hyperplanes corresponding to $g_\alpha(s)$ and $g_\beta(s)$ will be parallel and on opposite sides of $D$. It follows that
\[ \SC(D)+\epsilon\ge d_L(\alpha,\beta)\ge d(p_1g_\alpha(s),p_1g_\beta(s))\ge w(D). \]
Since this is true for all $\epsilon>0$, we have $\SC(D)\ge w(D)$.
\end{proof}

The paper \cite{houle1988computing} shows that for $D\subseteq \R^2$ a convex polygonal domain with $n$ vertices, the width and hence the sweeping cost of $D$ can be computed in time $O(n)$ and space $O(n)$ using the \emph{rotating calipers} technique.

\section{Extremal shapes}\label{sec:extremal}

Which convex shape of unit area has the largest sweeping cost? The papers \cite[Theorem~4.3]{cerdan2006comparing} and \cite{pal1921minimumproblem} state that if $D$ is a bounded planar convex domain, then $\area(D)\ge w(D)^2/\sqrt{3}$,
where equality is achieved if $D$ is an equilateral triangle. The next corollary follows immediately from Theorem~\ref{thm:sc=width}.

\begin{corollary}
Let $D$ be a convex Jordan domain. Then
\[ \area(D)\ge\frac{\SC(D)^2}{\sqrt{3}}, \]
where equality is achieved if $D$ is an equilateral triangle. Hence the equilateral triangle has the maximal sweeping cost over all planar convex domains of the same area.
\end{corollary}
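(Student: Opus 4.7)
The plan is essentially to combine the main result of Section~\ref{sec:convex} with a classical isoperimetric-type inequality due to P\'{a}l, so the corollary reduces to a one-line substitution. First I would invoke Theorem~\ref{thm:sc=width} to replace $\SC(D)$ by the width $w(D)$: for any convex Jordan domain $D$, $\SC(D)=w(D)$, hence $\SC(D)^2/\sqrt{3}=w(D)^2/\sqrt{3}$. Then I would apply the P\'{a}l--Cerd\'{a}n inequality $\area(D)\ge w(D)^2/\sqrt{3}$ from \cite{pal1921minimumproblem,cerdan2006comparing}, recalled in the paragraph preceding the corollary, to obtain the displayed bound immediately.

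Next I would verify the equality case directly on the equilateral triangle. An equilateral triangle of side $s$ has $\area=s^2\sqrt{3}/4$, and its width is its altitude $s\sqrt{3}/2$, since the minimum distance between parallel supporting lines on opposite sides of the triangle is realized by a pair consisting of one edge and the opposite vertex. A direct substitution gives $w(D)^2/\sqrt{3}=(3s^2/4)/\sqrt{3}=s^2\sqrt{3}/4=\area(D)$, so equality holds in both P\'{a}l's inequality and in the claimed bound.

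The final sentence of the corollary then follows by a simple contrapositive argument: if $D_\triangle$ is an equilateral triangle and $D'$ is any convex Jordan domain with $\area(D')=\area(D_\triangle)$, then $\SC(D')^2\le\sqrt{3}\,\area(D')=\sqrt{3}\,\area(D_\triangle)=\SC(D_\triangle)^2$, so $\SC(D')\le\SC(D_\triangle)$; rescaling to unit area then yields the stated extremality. The main (indeed, only) substantive obstacle has already been overcome in Theorem~\ref{thm:sc=width}; the corollary itself requires no further work beyond citing P\'{a}l's inequality and checking the explicit equality case on the equilateral triangle.
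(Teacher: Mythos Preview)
Your proposal is correct and mirrors the paper's own reasoning exactly: the paper states that the corollary ``follows immediately from Theorem~\ref{thm:sc=width}'' together with the P\'{a}l--Cerd\'{a}n inequality $\area(D)\ge w(D)^2/\sqrt{3}$ recalled just before the statement. Your explicit verification of the equality case and the extremality argument are simply a spelled-out version of what the paper leaves implicit.
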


The next example shows that there is no extremal shape for non-convex Jordan domains.

\begin{example}
A (non-convex) domain $D$ of unit area may have arbitrarily large sweeping cost.
\end{example}

\begin{proof}
Consider a deformation of an equilateral triangle with unit side lengths where we deform each edge towards the center of the triangle (Figure~\ref{fig:triangle-deform}). Note that as the sweeping cost converges to $\tfrac{1}{\sqrt{3}}$ (the distance from the center to a vertex) from above, the area of the shape tends zero. Rescaling each shape in this deformation to have area one shows that a non-convex domain of unit area may have arbitrarily large sweeping cost.

\begin{figure}[h]
	\def\svgwidth{6.5in}
	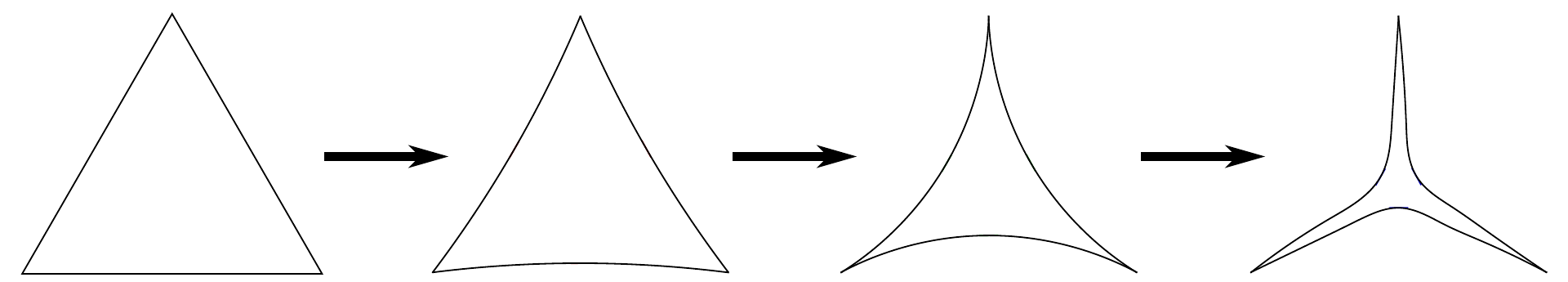
\caption{We deform each edge of the triangle towards the center of the triangle, producing a three-pronged shape. As the sweeping costs of the shapes converge to a fixed constant, the areas converge to zero.}
\label{fig:triangle-deform}
\end{figure}
\end{proof}

\section{Conclusion}\label{sec:conclusion}

Given a Jordan domain $D$ in the plane, we show that the sweeping cost of $D$ is at least as large as the shortest area-bisecting curve in $D$, and we give a formula for the sweeping cost in terms of the geodesic Fr\'{e}chet distance between two curves on the boundary of $D$ with non-equal winding numbers. We show that the sweeping cost of any convex domain is equal to its width. Therefore, the sweeping cost of a polygonal convex domain with $n$ vertices can be computed in time and space $O(n)$, and a convex domain of unit area with maximal sweeping cost is the equilateral triangle.

We end by mentioning two related settings of interest. First, let $D$ be a compact region in the plane, perhaps not simply-connected. Suppose the pursuer is now a union of curves. What can one say about the sweeping cost of $D$, measured as the sum of the curve lengths? Second, let $D\subset \R^n$ be the homemorphic image of the closed $n$-dimensional ball. What are the properties of the sweeping cost of $D$, when swept by an $(n-1)$-dimensional ``sensor surface"? For example, what is a 3-dimensional convex body of unit volume which maximizes this higher-dimensional sweeping cost?

\section{Acknowledgements}

We would like to thank Clayton Shonkwiler for pointing us to the references \cite{cerdan2006comparing,pal1921minimumproblem}.

\bibliographystyle{plain}
\bibliography{SweepingCostsOfPlanarDomains}

\appendix

\section{Additional lemmas and proofs}

\begin{lemma}\label{lem:measures}
If $(X,\mu)$ is a measure space and $U_i$ is a sequence of measurable sets in $X$, then
\begin{enumerate}
\item $\mu(\liminf_i U_i)\le\liminf_i\mu(U_i)$, and
\item $\mu(\limsup_i U_i)\ge\limsup_i\mu(U_i)$ if $\mu(X)<\infty$.
\end{enumerate}
\end{lemma}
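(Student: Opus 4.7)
The statement is the set-theoretic version of Fatou's lemma, so my plan is to prove it using monotone continuity of measure applied to the standard descriptions
\[ \liminf_i U_i = \bigcup_{n=1}^\infty \bigcap_{i\ge n} U_i \qquad \text{and} \qquad \limsup_i U_i = \bigcap_{n=1}^\infty \bigcup_{i\ge n} U_i. \]

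For part (1), I would set $V_n = \bigcap_{i\ge n} U_i$ and observe that $(V_n)$ is an increasing sequence of measurable sets whose union is $\liminf_i U_i$. Continuity of measure from below gives $\mu(\liminf_i U_i) = \lim_n \mu(V_n)$. Since $V_n \subseteq U_i$ for every $i \ge n$, monotonicity of $\mu$ yields $\mu(V_n) \le \inf_{i \ge n}\mu(U_i)$. Passing to the limit in $n$ on both sides gives the desired inequality $\mu(\liminf_i U_i) \le \liminf_i \mu(U_i)$.

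For part (2), I would set $W_n = \bigcup_{i\ge n} U_i$ and observe that $(W_n)$ is a decreasing sequence of measurable sets whose intersection is $\limsup_i U_i$. Here the assumption $\mu(X) < \infty$ is essential: it guarantees $\mu(W_1) < \infty$, which is the hypothesis needed to invoke continuity of measure from above and conclude $\mu(\limsup_i U_i) = \lim_n \mu(W_n)$. Since $W_n \supseteq U_i$ for every $i \ge n$, monotonicity gives $\mu(W_n) \ge \sup_{i\ge n}\mu(U_i)$, and taking $n\to\infty$ yields $\mu(\limsup_i U_i) \ge \limsup_i \mu(U_i)$.

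There is no real obstacle here; the only subtlety is ensuring the finiteness hypothesis is used exactly where needed, namely to justify continuity of measure from above in part (2). This is also where the result can fail without $\mu(X)<\infty$ (e.g.\ $U_i = [i,\infty)$ on $\R$ with Lebesgue measure), so I would note this briefly to clarify why the hypothesis appears only in the second statement.
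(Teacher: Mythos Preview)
Your proposal is correct and follows essentially the same approach as the paper: both arguments use the standard descriptions of $\liminf_i U_i$ and $\limsup_i U_i$ as monotone unions/intersections, apply continuity of measure (from below for (1), from above for (2) using $\mu(X)<\infty$), and then invoke monotonicity via $V_n\subseteq U_i\subseteq W_n$ for $i\ge n$. The paper writes out (2) first and declares (1) analogous, while you do the reverse and add a counterexample explaining the need for finiteness, but these are cosmetic differences only.
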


\begin{proof}
Recall $\limsup_i U_i=\cap_{i=1}^\infty(\cup_{j=i}^\infty U_i)$. Since the $\cup_{j=i}^\infty U_i$ are a decreasing sequence of sets, and since $\mu(X)<\infty$, \cite[Proposition~1.2.3]{cohn1980measure} implies $\mu(\limsup_i U_i)=\lim_i\mu(\cup_{j=i}^\infty U_i)$. Since $U_i\subseteq \cup_{j=i}^\infty U_j$, we have $\mu(U_i)\leq \mu(\cup_{j=i}^\infty U_j)$, and hence 
\[ \mu(\limsup_i U_i)=\lim_i\mu(\cup_{j=i}^\infty U_i)\ge\limsup_i\mu(U_i), \]
giving (2). The proof of (1) is similar except that the finiteness assumption is unnecessary.
\end{proof}

\begin{lemma}\label{lem:disk-curves}
Let $\D=\{(x,y)\in\R^2~|~x^2+y^2\le1\}$ be the unit disk. Given any point $p\in\R^2\setminus\D$ and any curve $\alpha\colon I\to\partial\D$, there exists a continuous function $g\colon I\times I\to\R^2$ such that
\begin{itemize}
\item $g(0,t)=p$ for all $t\in I$,
\item $g(1,t)=\alpha(t)$ for all $t\in I$, and
\item $g(s,t)\notin\D$ for $s<1$.
\end{itemize}
\end{lemma}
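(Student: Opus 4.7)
\medskip

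\noindent\textbf{Proof proposal for Lemma~\ref{lem:disk-curves}.}

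The plan is to construct $g$ explicitly in polar coordinates by simultaneously interpolating the radial and angular coordinates from those of $p$ to those of $\alpha(t)$, being careful to work on the universal cover so that the angular interpolation is continuous in $t$.

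First I would set up coordinates. Write $p=(r_p\cos\theta_p,r_p\sin\theta_p)$ with $r_p>1$ and some fixed $\theta_p\in\R$. Since $\alpha\colon I\to\partial\D$ is a continuous path into the circle, and $I$ is simply connected, standard covering-space theory applied to the exponential map $\R\to S^1$, $\tau\mapsto(\cos\tau,\sin\tau)$, produces a continuous lift $\tilde\alpha\colon I\to\R$ with $\alpha(t)=(\cos\tilde\alpha(t),\sin\tilde\alpha(t))$ for all $t\in I$.

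Next I would define
\[
g(s,t)=\bigl((1-s)r_p+s\bigr)\bigl(\cos\bigl((1-s)\theta_p+s\tilde\alpha(t)\bigr),\ \sin\bigl((1-s)\theta_p+s\tilde\alpha(t)\bigr)\bigr).
\]
This is a composition of continuous functions in $(s,t)$ and so is continuous. The three required conditions will follow by direct substitution: at $s=0$ the radial factor equals $r_p$ and the angle equals $\theta_p$, giving $g(0,t)=p$; at $s=1$ the radial factor equals $1$ and the angle equals $\tilde\alpha(t)$, giving $g(1,t)=\alpha(t)$.

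Finally I would check that $g(s,t)\notin\D$ for $s<1$. The distance from $g(s,t)$ to the origin equals the radial factor $(1-s)r_p+s$. For $s<1$, the inequality $(1-s)r_p+s>1$ is equivalent to $(1-s)(r_p-1)>0$, which holds since $r_p>1$ and $s<1$. Thus $|g(s,t)|>1$ and $g(s,t)\notin\D$. I expect no real obstacle here: the only nontrivial step is invoking path lifting to define $\tilde\alpha$, and everything else is a one-line algebraic check.
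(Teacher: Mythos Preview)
Your argument is correct. The lift $\tilde\alpha$ exists by the path-lifting property of the covering $\R\to S^1$, the formula for $g$ is manifestly continuous in $(s,t)$, and the radial computation $(1-s)r_p+s=1+(1-s)(r_p-1)>1$ for $s<1$ is exactly what is needed to keep $g(s,t)$ outside $\D$.

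Your route, however, differs from the paper's. The paper does not lift $\alpha$ or work in polar coordinates; instead it builds $g$ in three pieces. It pushes $\alpha$ outward to $\alpha_\epsilon(t)=(1+\epsilon)\alpha(t)$, chooses once and for all a path $\gamma$ in $\R^2\setminus\D$ from $p$ to $\alpha_\epsilon(0)$ (using only that $\R^2\setminus\D$ is path-connected), and then concatenates: first traverse $\gamma$, then run along the scaled curve $\alpha_\epsilon$ from parameter $0$ to parameter $t$, and finally slide radially inward from $\alpha_\epsilon(t)$ to $\alpha(t)$. Your construction is more compact---a single closed-form expression rather than a three-case definition---at the cost of invoking covering-space theory for the lift. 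The paper's construction avoids that invocation and uses only connectedness of the complement, which would generalize more readily to domains other than the round disk; your polar interpolation is specific to $\D$ but is cleaner here.
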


\begin{proof}
Given $r\ge0$, let $\alpha_r(t)\colon I\to\R^2$ be defined by $\alpha_r(t)=(1+r)\alpha(t)$. Fix some $\epsilon>0$. Pick a single curve $\gamma\colon I\to\R^2\setminus\D$ with $\gamma(0)=p$ and $\gamma(1)=\alpha_\epsilon(0)$; this is possible since $\R^2\setminus\D$ is connected by the Jordan curve theorem. We define the function $g\colon I\times I\to\R^2$ as follows:
\[g(s,t) = \begin{cases}
\gamma(3s) &\mbox{if }0 \leq s < \frac{1}{3}\\
\alpha_\epsilon((3s-1)t) &\mbox{if }\frac{1}{3} \leq s < \frac{2}{3}\\
\alpha_{(3-3s)\epsilon}(t) &\mbox{if }\frac{2}{3} \leq s \leq 1.
\end{cases} \]
Note that $g$ is continuous and satisfies all of the required conditions.\end{proof}

\end{document}